\theoremstyle{theorem} \newtheorem{theorem}{Theorem}
\newtheorem{lemma}{Lemma} \newtheorem{proposition}{Proposition}
\theoremstyle{definition} \newtheorem{corollary}{Corollary}
\newtheorem{remark}{Remark} \newtheorem{definition}{Definition}
\newenvironment{proof_splt}{\textbf{Proof.}}{}
\newcommand{\Par}[1] {\left({#1}\right)} \newcommand{\Bra}[1]
{\left[{#1}\right]} \newcommand{\Cur}[1] {\left\{{#1}\right\}}
\newcommand{\Abs}[1] {\left|{#1}\right|}
\def \pd {\partial} \def \bd {\partial} \def \curl {\textup{curl}\,}
\def \div {\textup{div}\,} \def \Lap {\Delta} 
\def \meas {\textup{meas}\,} \def \dist {\textup{dist}\,}   \def \arg
\def \Ind {\mathbbm{1}}
\def \Im {\textup{Im}\,} \def \Re {\textup{Re}\,}
\def \const {\text{const}}
 \def \Z {\mathbb{Z}}  \def \R
\def \C {\mathbb{C}}
\def \S {\mathbb{S}}
\def \eps {\varepsilon} \def \ve {\varepsilon} 
\def \index {\mathfrak{I}}
\def \d {{\delta}}  \def \he {{h_{ext}}} \def \heS
\def \O {{\Omega}}  \def \Od {{\O_\d}} \def \odj {{\omega_\d^j}} \def \odi
\def \aj {{a^j}} % locations of the holes
\def \ai {{a^i}} % locations of the holes
\def \grj {{\gamma_{r}^j}} % curves around holes to accumulate energy
\def \gRj
\def \odjo {{\overline{\omega}_\d^j}} \def \Rd {{R_\d}} \def \Rmi {{
    R_{\text{min}} }} \def \Rma {{ R_{\text{max}} }} \def \Brj
\def \Tj {{B(\aj, R/2) \setminus \overline{B(\aj,
      R/4)}}} % annulus around jth hole where $\phi_j$ is non-constant
\def \bi {{b^i}} % locations
\def \bdi {{d_i}} % degrees
\def \bdsum {{d}} % total degree
\def \dj {{D^j}} \def \dd {{D_\d}} \def \ddj {{D_\d^j}} \def \dedj
 \def \dvi {{D_v^i}} \def \dvj {{D_v^j}} \def \dv
\def \drj {{D_r^j}} \def \drjp {{\Par{D_r^j}}}
\def \djp {{\Par{D^j}}} \def \dvjp {{\Par{D_v^j}}}
\def \udd {{u_{\d D}}} \def \ud {{u_\d}} \def \ude {{u_\d^{\eps}}}
\def \Add {{A_{\d D}}} \def \Ad {{A_\d}} \def \Ade {{A_\d^{\eps}}}
\def \hdd {{h_{\d D}}} \def \hd {{h_\d}} \def \hde {{h_\d^{\eps}}}
\def \zisb {{\zeta_i^{\text{sub}}}} \def \zisp
\def \zijsp {{\zeta_{ij}^{\text{sup}}}}
\def \Hrj {{H_{R}^j}} \def \Hj {{H_j}} \def \Htj {{\widetilde{H}_j}}
\def \cj {{c_j}} \def \ci {{c_i}} \def \fj {{f_j}} \def \Adj
\def \uddo {{\overline{u}_{\d D}}} \def \Pde {{\Pi_\d^{\eps}}}
\title{On approximation of Ginzburg-Landau minimizers by $\S^1$-valued
  maps in domains with vanishingly small holes}
\author{Leonid Berlyand\thanks{Department of Mathematics, Pennsylvania
    State University, \tt{berlyand@math.psu.edu},
    \tt{iaroshenko@psu.edu}}, Dmitry Golovaty\thanks{Department of
    Mathematics, The University of Akron, \tt{dmitry@uakron.edu}},
  Oleksandr Iaroshenko\footnotemark[1], Volodymyr
  Rybalko\thanks{Mathematical Division, B. I. Verkin Institute for Low
    Temperature, Physics and Engineering, Kharkiv, Ukraine,
    \tt{vrybalko@ilt.kharkov.ua}}} \date{\today}
\begin{document}
\maketitle
%	\tableofcontents
%	\listoftodos \vspace{4mm}
	
Abstract: We consider a two-dimensional Ginzburg-Landau problem on an
arbitrary domain with a finite number of vanishingly small circular
holes. A special choice of scaling relation between the material and
geometric parameters (Ginzburg-Landau parameter vs hole radius) is
motivated by a recently discovered phenomenon of vortex phase
separation in superconducting composites. We show that, for each hole,
the degrees of minimizers of the Ginzburg-Landau problems in the
classes of $\mathbb S^1$-valued and $\mathbb C$-valued maps, respectively, are the
same. The presence of two parameters that are widely separated on a
logarithmic scale constitutes the principal difficulty of the analysis
that is based on energy decomposition techniques.

\section{Introduction}

The present study is motivated by the pinning phenomenon in type-II
superconducting composites. Type-II superconductors are characterized
by vanishing resistivity and complete expulsion of magnetic fields
from the bulk of the material at sufficiently low temperatures. When
the magnitude $h_{ext}$ of an external magnetic field
$\mathbf{h}_{ext}$ exceeds a certain threshold, the field begins to
penetrate the superconductor along isolated vortex lines that may
move, resulting in energy dissipation. This motion and related energy
losses can be inhibited by pinning the lines to impurities or holes in
a \textit{superconducting composite}. Understanding the role of
imperfections in a superconductor can thus be used to design more
efficient superconducting materials. In what follows, we will consider
a cylindrical superconducting sample containing rod-like inclusions or
\textit{columnar defects} elongated along the axis of the cylinder, so
that the sample can be represented by its cross-section
$\O\subset\mathbb{R}^2$. Then the vortex lines penetrate each
cross-section at isolated points, called \textit{vortices}.
	
Superconductivity is typically modeled within the framework of the
Ginzburg-Landau theory \cite{GinLan65} in terms of an order-parameter
$u\in\mathbb C$ and the vector potential of the induced magnetic field
$A\in\mathbb R^2$. The appearance and behavior of vortices for the
minimizers of the Ginzburg-Landau functional
\begin{equation}\label{eq:full_GL_func_1}
  GL^\ve[u, A]
  = \frac12\int_{\Omega}|\left(\nabla - iA\right)u|^2\,dx
  + \frac{1}{4\ve^2}\int_{\Omega}(1-|u|^2)^2\,dx
  + \frac12\int_{\Omega}(\curl A - \he)^2\,dx
\end{equation}
have been studied, in particular, in \cite{SanSer00, SanSer03} where
the existence of two critical magnetic fields, $H_{c1}$ and $H_{c2}$,
was established rigorously for simply-connected domain when $\ve>0$ is small. When the external
magnetic field is weak ($\he < H_{c1}$) it is completely expelled from
the bulk semiconductor (Meissner effect) and there are no
vortices. When the field strength is ramped up from $H_{c1}$ to
$H_{c2}$, the magnetic field penetrates the superconductor through an
increasing number of isolated vortices while the superconductivity is
destroyed everywhere, once the field exceeds $H_{c2}$.
		
The pinning phenomenon that we consider in this paper is observed in
non-simply-connected domains with holes that may or may not contain
another material. If a hole "pins" a vortex, the order parameter $u$
has a nonzero winding number on the boundary of the hole. We refer to
this object as a \textit{hole vortex}. Note that degrees of the hole
vortices increase along with the strength of the external magnetic
field. This situation is in contrast with the regular bulk vortices
that have degree $\pm1$ and increase in number as the field becomes
stronger.

An alternative way to model the impurities is to consider a potential
term $(a(x)-|u|^2)^2$ where $a(x)$ varies throughout the sample. It
was proven in \cite{ChaRic97} that the impurities corresponding to the
weakest superconductivity (where $a(x)$ is minimal) pin the vortices
first. This model was studied further in \cite{AftSanSer01} and
\cite{AndBauPhi03} to demonstrate the existence of nontrivial pinning
patterns and in \cite{AlaBro05} to investigate the breakdown of
pinning in an increasing external magnetic field, among other
issues. A composite consisting of two superconducting samples with
different critical temperatures was considered in \cite{AydKac09,
  Kac10} where nucleation of vortices near the interface was shown to
occur.

In our model we consider a superconductor with holes, similar to the
setup in \cite{AlaBro06}. In that work, the authors considered the
asymptotic limits of minimizers of $GL^\eps$ as $\eps\to0$ and
determined that holes act as pinning sites gaining nonzero degree for
moderate but bounded magnetic fields. For magnetic fields below the
threshold of order $|\ln\eps|$ the degree of the order parameter on
the holes continues to grow without bound, however beyond the critical
field strength, the pinning breaks down and vortices appear in the
interior of the superconductor. Since the contribution to the energy
from the hole vortices has a logarithmic dependence on the diameter of
the holes, the hole size can be used as an additional small parameter
to enforce a finite degree of the hole vortex in the limit of small
$\eps$. The domain with finitely many shrinking (pinning) subdomains
with weakened superconductivity was considered in \cite{DosMis11} in
the case of the simplified Ginzburg-Landau functional.  The model with
a potential term $(a(x)-|u|^2)^2$ with piecewise constant $a(x)$ was
used to enforce pinning and it was observed that the vortices are
localized within pinning domains and converge to their centers.

The problem considered in this work was inspired by the result in
\cite{BerRyb13} where a periodic lattice of vanishingly small holes
was considered. The main interest was in the regime when the radii of
the holes were exponentially small compared to the period $a$ of the
lattice; both of these parameters were assumed to converge to zero
along with $\eps$. Using homogenization-type arguments, it was shown
in \cite{BerRyb13} that in the limit of $\eps\to 0$ and when the
external magnetic field of order $O(a^{-2})$, the minimizers can be
characterized by nested subdomains of constant vorticity. The physical
nature of this result was discussed in \cite{IarBerRybVin13}. The
analysis in \cite{BerRyb13} relies on a conjecture that for small
$\eps$, the degrees of the hole vortices are the same for both $\C$-
and $\S^1$-valued maps. The principal aim of the present paper is to
establish the validity of this conjecture in the case of finitely many
vanishingly small holes.

Our approach builds on that of \cite{AlaBro06}, combined with the
appropriately chosen lower bounds on the energy and the ball
construction method \cite{BetBreHel94}, \cite{Jer99},
\cite{San98}-\nocite{SanSer00, SanSer00-2, SanSer03}\cite{SanSer07}.

The paper is organized as follows. Section \ref{sec:main_results}
contains the formulation of the problem, as well as the main result
described in Theorem \ref{thm:main}. In Section
\ref{sec:s1_valued_problem}, we prove that the minimizers in the class
of $\S^1$-valued maps are characterized by the unique set of integer
degrees on the holes. In Section \ref{sec:energy_decoposition}, we use
the approach, similar to that in \cite{AlaBro06}, to express the
energy of a $\C$-valued minimizer as the sum of the the energy of the
$S^1$-valued minimizer and the remainder terms. Compared to
\cite{AlaBro06}, additional complications arise in the analysis due to
the fact that the radius of the holes is not fixed in the present
work. In particular, because of the presence of another small
parameter, we use a different ball construction method that
incorporates both the Ginzburg-Landau parameter $\eps$ and the holes
radius $\d$. In Section \ref{sec:absence_of_bulk_vortices} we show
that the minimizes cannot have vortices with nonzero degrees outside
of the holes. This section also provides sharp energy estimates that
allow us to prove the main theorem. Finally, in Section
\ref{sec:equality_of_degrees}, the equality of degrees is established based on the estimates obtained
in the previous section.

\section{Main results}\label{sec:main_results}

Let $B\left(x_0, R\right)\subset\mathbb{R}^2$ denote a disk of radius
$R$ centered at $x_0$. Let $\O$ be an arbitrary smooth, bounded,
simply connected domain and suppose that
$\odj = B(\aj, \d) \subset \O$, $j = 1\dots N$ represent the holes in
$\Omega$, where $\aj$ is the center of the hole $j=1,\ldots,N$ and
$\d\ll1$ is its radius. We introduce the perforated domain
\begin{equation}
  \Od = \O \setminus \bigcup_{j=1}^N \odj
\end{equation}
and consider the Ginzburg-Landau functional
\begin{equation}\label{eq:full_GL_func}
  GL^\eps_\d[u, A]
  = \frac12\int\limits_{\Od}|\left(\nabla - iA\right)u|^2\,dx
  + \frac{1}{4\eps^2}\int\limits_{\Od}(1-|u|^2)^2\,dx
  + \frac12\int\limits_{\O}(\curl A - \he)^2\,dx.
\end{equation} 
The domain $\Od$ represents a cross-section of a superconducting sample. Here $u:\Od\to\mathbb{C}$ is an order parameter, $A:\O\to\mathbb{R}^2$ is a vector potential of the induced magnetic field, and $\he$ is the magnitude of the external magnetic field. By $\eps$ we denote the inverse of the Ginzburg-Landau parameter that determines the radius of a typical vortex core. In what follows, we will assume that the cores radii are much smaller than the radius of the holes $\odj$.
    
The functional $GL^\eps_\d[u, A]$ is gauge-invariant, i.e., for any
$\varphi\in H^2(\Omega, \mathbb{R})$ and any admissible pair $(u, A)$,
the equality
$GL^\eps_\d[u, A] = GL^\eps_\d\left[u\, e^{i\varphi}, A +
  \nabla\varphi\right]$ always holds. This degeneracy can be
eliminated by imposing the \textit{Coulomb gauge}, that is requiring
that
\begin{equation}\label{eq:gauge}
  A \in H(\Omega,\mathbb{R}^2)
  := \left\{ a\in H^1(\Omega, \mathbb{R}^2) \mid \div{a}=0 \text{ in } \Omega,
    \ a\cdot\nu=0 \text{ on } \partial\Omega \right\},
\end{equation}
where $\nu$ is an outward unit normal vector to $\partial\Omega$. We
will fix the Coulomb gauge throughout the rest of this work.
    
We consider the minimizers of the two variational problems
\begin{equation}\label{eq:min_prob}
  \Par{\ude, \Ade}
  := \arg\min\Cur{GL^\eps_\d[u, A] \mid u \in H^1(\Od; \C), A \in H(\O; \R^2)},
\end{equation}
and
\begin{equation}\label{eq:min_prob_s1.0}
  \Par{\ud, \Ad}
  := \arg\min\Cur{GL_\d^\eps[u, A] \mid u \in H^1(\Od; S^1), A \in H(\O; \R^2)}.
\end{equation}
Note that, trivially,
\begin{equation}\label{eq:min_prob_s1}
  \Par{\ud, \Ad}
  := \arg\min\Cur{GL_\d[u, A] \mid u \in H^1(\Od; S^1), A \in H(\O; \R^2)},
\end{equation}
where
\begin{equation}\label{eq:S1_func}
  GL_\d[u, A]
  = \frac12\int\limits_{\Od} |\nabla u - iAu|^2\,dx
  + \frac12\int\limits_\O(\curl A - \he)^2\,dx.
\end{equation}
    
For any hole center $a^j,\ j=1,\ldots,N$ and $R>0$, let
$\gRj = \bd B(\aj, R)$ be a circle of radius $R$ centered at $a^j$. In
what follows we make a frequent use of the following
    
\begin{definition}
  Given a $u\in H^1\left(\Omega_\delta,\mathbb C\right)$ and
  $a^j,\ j=1,\ldots,N$, suppose there exists an $R = \d + o(\d)$ such
  that the winding number $d = \deg{(u/|u|, \gRj)}\neq0$.  Then $u$ is
  said to have a {\it hole vortex} of the degree $d$ inside $\odj$.
\end{definition}

The existence of $\gRj$ is established in the Theorem \ref{thm:main}
and they are specified using the results of Theorem
\ref{thm:bcm}. Hole vortices may exist inside $\odj$ for the
minimizers of both \eqref{eq:min_prob} and \eqref{eq:min_prob_s1} and
our principal goal is to prove that the respective degrees of the hole
vortices arising in both problems coincide for the same external
magnetic field as long as the parameter $\d$ is sufficiently
small. This result implies that the non-linear potential term can be
effectively replaced by the constraint $|u| = 1$ when one is
interested in studying the distribution of degrees of the hole
vortices for the minimizer of the problem \eqref{eq:min_prob}.
    
The main result of this work is the following theorem.
\begin{theorem}\label{thm:main}
  Assume that the parameters $\eps$ and $\delta$ satisfy
  \begin{equation}\label{eq:cond_eps}
    |\log\eps| \gg |\log\d|.
  \end{equation}
  Suppose
  \begin{equation}\label{eq:uniqueness_cond}
    \sigma \in \R_+\setminus\Sigma
  \end{equation}
  where $\Sigma$ is a discrete set described below. Let
  \begin{equation}\label{eq:cond_he}
    \he = \sigma|\log\d|
  \end{equation}
  and $\Par{\ude, \Ade}$ and $\Par{\ud, \Ad}$ be defined by
  \eqref{eq:min_prob} and \eqref{eq:min_prob_s1}, respectively.
        
  Then, for a sufficiently small $\d$, there exists an
  $\Rd\in[\d,\d+\d^2]$ such that 
  \begin{enumerate}
\item[\bf (i)] $\ddj = \deg{\left(\ud, \gRj \right)}$ coincide for all
  $j = 1\dots N$ when $\dedj$ are defined, e.g. when $\ude \neq 0$ on
  $\gRj$;
  \item[\bf (ii)] the degrees of the hole vortices
  $\dedj = \deg{\left(\frac{\ude}{\left|\ude\right|}, \gRj \right)}$;
\end{enumerate}
for any $R \geq \Rd$ for which
  $\gRj = \bd B(\aj, R),\ j = 1\ldots N$ are mutually disjoint and do
  not intersect $\bd\O$.
\end{theorem}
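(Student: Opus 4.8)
The plan is to compare the ground-state energies of the two problems \eqref{eq:min_prob} and \eqref{eq:min_prob_s1} on the logarithmic scale fixed by $\he=\sigma|\log\d|$, using that, by Section~\ref{sec:s1_valued_problem}, the optimal degree vector for the $\S^1$-valued problem is the \emph{unique} minimizer of a limiting discrete energy and is separated from every competitor by a gap of order $|\log\d|$. The first ingredient is the \emph{upper bound}: $(\ud,\Ad)$ is admissible in \eqref{eq:min_prob} and $|\ud|\equiv1$ annihilates the potential term, so $GL^\eps_\d[\ude,\Ade]\le GL^\eps_\d[\ud,\Ad]=GL_\d[\ud,\Ad]=E_\d(\mathbf d^\ast)$, where $\mathbf d^\ast=(\ddj)_{j=1}^N\in\Z^N$ is the minimizing degree vector and $E_\d(\mathbf d)$ denotes the minimal $\S^1$-energy among maps with prescribed hole degrees $\mathbf d$. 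From Section~\ref{sec:s1_valued_problem} one has $E_\d(\mathbf d)=\mathcal E(\mathbf d,\sigma)\,|\log\d|+o(|\log\d|)$ for a limiting functional $\mathcal E(\cdot,\sigma)$ on $\Z^N$, uniquely minimized at $\mathbf d^\ast$ precisely because $\sigma\notin\Sigma$ (the set $\Sigma$ of exceptional $\sigma$ being discrete); in particular there is $c(\sigma)>0$ with $E_\d(\mathbf d)-E_\d(\mathbf d^\ast)\ge c(\sigma)|\log\d|$ for all small $\d$ and all $\mathbf d\ne\mathbf d^\ast$ in any fixed bounded subset of $\Z^N$.

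On the $\C$-valued side, I would first invoke Theorem~\ref{thm:bcm} together with the ball construction of Section~\ref{sec:energy_decoposition} to produce the radius $\Rd\in[\d,\d+\d^2]$ and a collection of bad balls covering the zero set and vortex cores of $\ude$ that, after the growth process, are contained in $\bigcup_j B(\aj,\Rd)$. Consequently $\ude$ does not vanish on any $\gRj$ with $R\ge\Rd$, the winding numbers $\dedj=\deg(\ude/|\ude|,\gRj)$ and $\ddj=\deg(\ud,\gRj)$ are well defined, and, by homotopy invariance in vortex-free annuli, they are independent of $R$ as long as the circles $\gRj$ stay mutually disjoint and interior to $\O$; this yields the existence of $\Rd$ and statements (i)--(ii) modulo the equality $\dedj=\ddj$. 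Next, using Section~\ref{sec:absence_of_bulk_vortices}: no bad ball with nonzero degree lies outside the holes, since any such ball would, through the combined-scale ball construction, force an energy of order $|\log\eps|\gg|\log\d|$, contradicting the $O(|\log\d|)$ upper bound. Hence all of the vorticity of $\ude$ sits on the holes, with degree vector $\mathbf d^\eps:=(\dedj)_{j=1}^N$, and the same comparison bounds $\sum_j|\dedj|$ by a constant independent of $\d$ and $\eps$, so $\mathbf d^\eps$ ranges over a fixed bounded subset of $\Z^N$.

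The core step is the matching \emph{lower bound}. Combining the energy decomposition of Section~\ref{sec:energy_decoposition} — writing $\ude=\rho\,w$ with $w$ taking values in $\S^1$ off the bad balls and splitting $GL^\eps_\d[\ude,\Ade]$ into a $\rho$-part, the magnetic Dirichlet energy of $w$, and cross terms — with the sharp estimates of Section~\ref{sec:absence_of_bulk_vortices}, one obtains $GL^\eps_\d[\ude,\Ade]\ge E_\d(\mathbf d^\eps)-o(|\log\d|)$; the crucial point, forced by $|\log\eps|\gg|\log\d|$, is that every remainder — the potential energy stored in $1-|\ude|^2$, the energy inside the bad balls, and the magnetic cross terms — is $o(|\log\d|)$ and therefore cannot corrupt the $O(|\log\d|)$ information carried by the hole degrees. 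Comparing with the upper bound gives $E_\d(\mathbf d^\eps)\le E_\d(\mathbf d^\ast)+o(|\log\d|)$; since $\mathbf d^\eps$ lies in a fixed bounded set and $\mathbf d^\ast$ beats every other element of it by at least $c(\sigma)|\log\d|$, we conclude $\mathbf d^\eps=\mathbf d^\ast$ — i.e.\ $\dedj=\ddj$ for every $j$ — for all sufficiently small $\d$. This is what Section~\ref{sec:equality_of_degrees} carries out, and combined with the $R$-independence above it gives both (i) and (ii).

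I expect the main obstacle to be this last lower bound in the presence of two scales separated on a logarithmic scale: one must run a ball-construction and energy-lower-bound argument that resolves the core scale $\eps$ and the hole scale $\d$ simultaneously, and then certify that each error term is genuinely $o(|\log\d|)$ rather than merely $o(|\log\eps|)$, so that the hole-degree signal survives the comparison with the $\S^1$ energy. A secondary difficulty is extracting the uniform-in-$(\d,\eps)$ bound on $\sum_j|\dedj|$ and the quantitative gap $c(\sigma)>0$ of the limiting discrete energy, which is precisely where the genericity hypothesis $\sigma\notin\Sigma$ is used.
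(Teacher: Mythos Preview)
Your high-level strategy---upper bound by the $\S^1$ energy, matching lower bound carrying the hole degrees, then exploit the $c(\sigma)|\log\d|$ gap of the limiting discrete energy---is indeed the paper's strategy. But two points of your implementation differ materially from the paper and one of them is inaccurate.

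First, the decomposition. You propose the polar splitting $\ude=\rho\,w$ with $w\in\S^1$ off the bad balls and aim for a direct lower bound $GL^\eps_\d[\ude,\Ade]\ge E_\d(\mathbf d^\eps)-o(|\log\d|)$. The paper does \emph{not} do this. It factors the minimizer against the \emph{optimal} $\S^1$ configuration: $\ude=v\,\udd$, $\Ade=\Add+B$, with $D=\mathbf d^\ast$ the $\S^1$-optimal degree vector and $v$ still $\C$-valued. This yields an exact identity (Lemma~\ref{lem:energy_decomp})
\[
GL^\eps_\d[\ude,\Ade]=GL_\d[\udd,\Add]+F_\d[v,B]-\int_{\Od}\nabla^\bot\hdd\cdot\Im(\bar v\nabla v)\,dx+o(1),
\]
and then the trivial comparison $GL^\eps_\d[\ude,\Ade]\le GL_\d[\udd,\Add]$ turns this into the inequality $F_\d[v,B]\le\int\nabla^\bot\hdd\cdot\Im(\bar v\nabla v)+o(1)$. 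The cross term is rewritten (Lemma~\ref{lem:int_repres}) as $2\pi\sum_i(\he-\hdd(\bi))\bdi+2\pi\sum_j\dvj(\he-\Hrj)+O(1)$ with $\dvj=\dedj-\dj$, while $F_\d$ is bounded below by a ball-construction term (for the bulk $\bdi$) plus an annular lower bound $\sim\pi\sum_j(\dvj)^2|\log\d|$. One obtains a quadratic inequality in the \emph{differences} $\dvj$ with $O(1)$ error, whose only integer solution is $\dvj=0$. Your route via polar $\rho w$ would require extending $w$ across the bad balls to produce an admissible $\S^1$ competitor with degrees $\mathbf d^\eps$, which is extra work the paper avoids; what buys the paper its clean $O(1)$ errors is precisely that $\udd$ is globally defined and is itself the $\S^1$ minimizer for $D$.

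Second, your description of the ball construction is off. You write that the bad balls ``after the growth process are contained in $\bigcup_j B(\aj,\Rd)$''. They are not: Theorem~\ref{thm:bcm} with $\rho=\d^2/2$ only gives disjoint bad balls of \emph{total} radius $\d^2/2$, located anywhere in $\Od$. The existence of $\Rd\in[\d,\d+\d^2]$ with $\ude\neq0$ on each $\gRj$ is a pigeonhole consequence of $\sum r_i\le\d^2/2$, and the statement that no nonzero-degree ball lies in the bulk is a \emph{conclusion} (Lemma~\ref{lem:absence_of_bulk_vortices}), obtained by playing the $|\log\eps|$ cost from the ball construction against the $O(|\log\d|)$ size of the cross term---not an input to the construction. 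With that correction your outline is compatible with the paper's proof, but the engine is the $v\,\udd$ decomposition, not the polar one.
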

\begin{remark}
  The set $\Sigma$ includes the appropriately scaled values of the
  external field at which the degree of one of the hole vortices
  increments by one, i.e. from $d$ to $d+1$. At these threshold field
  strengths, the leading order approximation of the energy is the same
  for both degrees $d$ and $d+1$ and the degrees of the hole vortices
  of minimizers $\ude$ and $\ud$ cannot be determined uniquely. The
  set $\Sigma$ is described as follows:
  \begin{equation}
    \Sigma = \bigcup_{j=1}^N \Sigma_j
    \quad\text{where}\quad
    \Sigma_j = \Cur{\sigma > 0 \mid \sigma\Par{1 - \xi_0(\aj)} \in \Z + \frac12}
  \end{equation}
  consists of the threshold field values for the hole $j = 1\dots N$ and
  the function $\xi_0$ solves the boundary value problem
  \begin{equation}\label{eq:xi_0}
    \begin{cases}
      -\Lap \xi_0 + \xi_0 = 0 &\text{ in } \O, \\
      \xi_0 = 1 &\text{ on } \bd\O.
    \end{cases}
  \end{equation}
\end{remark}
\begin{remark}
  Notice that, since $\ud(x) \in \S^1$, there are no vortices outside
  of the holes and thus
  \begin{equation}
    \ddj = \deg{\Par{\ud, \grj}} = \deg{\Par{\ud, \bd\odj}}
  \end{equation}
  for all $j = 1\ldots N$.
\end{remark}
\begin{remark}
  As we will show in Section \ref{sec:absence_of_bulk_vortices},
  although the external magnetic field satisfying the bound
  \eqref{eq:cond_he} is strong enough to generate hole vortices, it is
  too weak for vortices to appear inside the bulk superconductor
  $\Od$, away from the boundary $\bd\O$.
\end{remark}
    
We prove Theorem \ref{thm:main} in two steps. First, we consider
minimizers $\Par{\udd, \Add}$ of the variational problem
\eqref{eq:S1_func} in the class of $\S^1$-valued maps with the
prescribed degrees, $\deg(u, \bd\odj) = \dj$, $j = 1\dots N$, by
setting
\begin{equation}\label{eq:min_prob_s1_prescribed_D}
  \Par{\udd, \Add}
  := \arg\min\Cur{GL_\d[u, A] \mid u \in H^1(\Od; S^1), A \in H(\O; \R^2), \deg(u, \bd\odj) = \dj}.
\end{equation} Then the degrees $\ddj$ of the map $\ud$ minimize the energy
\begin{equation}\label{eq:energy_as_l_of_D}
  l_\d(D) := GL_\d\left[\udd, \Add\right]
\end{equation}
where $D = (D^1, \dots, D^N)$. It turns out that the function
$l_\d(D)$ is a quadratic polynomial in $D^1,\dots,D^N$. Its minimum is
attained at one of the integer points adjacent to the vertex of
paraboloid $l_\d(T)$ with $T \in \R^N$. We enforce the condition
\eqref{eq:uniqueness_cond} to ensure that such minimizing integer
point is unique.
        
We then express a minimizer $\Par{\ude, \Ade}$ of $GL_\d^\eps[u, A]$
as a sum of $\Par{\ud, \Ad}$ and an appropriate correction term and
consider a corresponding energy decomposition in the spirit of the
approach in \cite{AlaBro06} for finite-size holes. The analysis relies
principally on the techniques developed in \cite{AlaBro06} and the
ball construction method \cite{SanSer07}. Compared to \cite{AlaBro06},
new challenges arise due to the presence of the second small parameter
that require additional estimates and sharper energy bounds.

\section{$S^1$-valued problem}\label{sec:s1_valued_problem}

The main goal of this section is to establish the relation between the
energy of the minimizer $\Par{\udd, \Add}$ and the degrees $D$ of the
hole vortices corresponding to $\udd$. We approximate the minimizer
$\Par{\udd, \Add}$, calculate its energy
$l_\d(D) = GL_\d\Bra{\udd, \Add}$, and find the minimizing degrees
$\dd = (D_\d^1, \dots, D_\d^N)$. We prove the following theorem.

\begin{theorem}\label{thm:S1}
  Let $\Par{\udd, \Add}$ be a minimizer of
  \eqref{eq:min_prob_s1_prescribed_D} with the prescribed degrees
  $D \in \Z^N$ on the holes. Then the Ginzburg-Landau energy
  $GL_\d\left[\udd, \Add\right]$, expressed as a function of $D$,
  takes the following form:
  \begin{equation}\label{eq:l_expansion}
    l_\d(D)
    = \pi\sum_{j=1}^N \Bra{ \djp^2 - 2\sigma\Par{1 - \xi_0(\aj)}\dj }|\log\d|
    + C|\log\d|^2
    + |D|^2 O(1)
  \end{equation}
  where $\xi_0$ solves the boundary value problem \eqref{eq:xi_0},
  $C = O(1)$, and $|D| = \max\limits_j|\dj|$.
\end{theorem}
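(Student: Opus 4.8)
The plan is to eliminate the order parameter, reduce $l_\d(D)$ to a scalar problem for the induced magnetic field $h:=\curl A$, observe that the resulting value function is an exact quadratic polynomial in $D$, and then compute the leading (order $|\log\d|$) part of each of its coefficients.

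\emph{Reduction.} Since $\udd\in H^1(\Od;\S^1)$ with $\deg(\udd,\bd\odj)=\dj$, write $\udd=u_\star e^{i\varphi}$ with $\varphi\in H^1(\Od;\R)$, where $u_\star=\prod_{j=1}^N\bigl((x-\aj)/|x-\aj|\bigr)^{\dj}$ is a fixed canonical map carrying the prescribed degrees. Then $GL_\d[\udd,\Add]$ is a convex quadratic functional of $(\varphi,A)$. Minimizing over $\varphi$ is a Hodge-type projection, and since $A\mapsto\curl A$ maps $H(\O;\R^2)$ onto $L^2(\O)$ in the Coulomb gauge, the problem reduces to a minimization over $h\in L^2(\O)$,
\[
  l_\d(D)=\min_{h}\Bigl\{\tfrac12\int_{\Od}|v|^2\,dx+\tfrac12\int_\O(h-\he)^2\,dx\Bigr\},
\]
where $v$ is the divergence-free field tangent to $\bd\Od$ with $\curl v=-h$ in $\Od$ and circulation $2\pi\dj-\int_{\odj}h$ around $\bd\odj$. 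The data enter affinely in $D$ (and in $\he$) and the objective is quadratic, so the minimum over $h$ is a quadratic polynomial in $D$; it therefore remains only to determine the leading behavior in $|\log\d|$ of its degree-$0$, degree-$1$ and degree-$2$ coefficients.

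\emph{The minimizing field.} The Euler--Lagrange equations show that the optimal field $h_\d$ solves the London equation $-\Lap h_\d+h_\d=0$ in $\Od$, equals $\he$ on $\bd\O$, is constant on each $\bd\odj$, and has flux $\oint_{\bd\odj}\partial_\nu h_\d\,ds=2\pi\dj+O(\he\d^2)$; moreover $l_\d(D)=\tfrac12\int_{\Od}|\nabla h_\d|^2+\tfrac12\int_\O(h_\d-\he)^2$. I would then write $h_\d=\he\,\xi_0+h_1$, with $\xi_0$ solving \eqref{eq:xi_0} (so $\he\xi_0$ carries the trace on $\bd\O$ and solves the London equation in all of $\O$), whence $h_1$ solves $-\Lap h_1+h_1=0$ in $\Od$ with $h_1=0$ on $\bd\O$ and the vortex fluxes on the holes. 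Because $\d\to0$, $h_1$ is close to $2\pi\sum_j\dj\,G_j$, where $G_j$ is the Green function of $-\Lap+1$ on $\O$ with zero Dirichlet data and pole at $\aj$; thus $h_1\sim-\dj\log|x-\aj|$ near $\aj$, is of size $|\log\d|$ on $\bd\odj$ (with trace $\dj|\log\d|+O(|D|)$, the $O(|D|)$ term built from the Robin constants of the $G_j$ at $\aj$ and the pairwise values $G_j(a^k)$, all $O(1)$ since the $\aj$ are fixed and distinct), and is $O(|D|)$ away from the holes.

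\emph{Energy computation.} Inserting $h_\d=\he\xi_0+h_1$ and expanding, $l_\d(D)$ splits into three parts. Testing the equation for $\xi_0$ against $\xi_0-1$ gives $\int_\O|\nabla\xi_0|^2+(\xi_0-1)^2=\int_\O(1-\xi_0)$, so the pure $\xi_0$ part equals $\tfrac12\he^2\int_\O(1-\xi_0)+O(\he^2\d^2)=C|\log\d|^2$ with $C=\tfrac12\sigma^2\int_\O(1-\xi_0)+o(1)=O(1)$, the degree-$0$ coefficient. The pure $h_1$ part, using $-\Lap h_1+h_1=0$ in $\Od$ and Green's identity to move the integral onto $\bd\odj$, equals $\tfrac12\sum_j(\text{flux on }\bd\odj)(\text{trace on }\bd\odj)=\pi\sum_j\djp^2|\log\d|+O(|D|^2)$ — the self-energy $\pi\djp^2$ of a degree-$\dj$ vortex on a hole of radius $\d$ giving the degree-$2$ coefficient, with the off-diagonal interactions of order $1$. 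Finally the cross part $\he\int_{\Od}\bigl(\nabla\xi_0\cdot\nabla h_1+(\xi_0-1)h_1\bigr)$, using $\Lap\xi_0=\xi_0$, collapses to $-\he\int_{\Od}h_1$ up to boundary terms over $\bd\odj$ of size $O(|D|^2)$; and the identity $\int_\O G_j\,dx=1-\xi_0(\aj)$ — obtained by pairing $G_j$ with the solution of $-\Lap\eta+\eta=1$, $\eta|_{\bd\O}=0$, which is precisely $\eta=1-\xi_0$ — gives $\int_{\Od}h_1=2\pi\sum_j(1-\xi_0(\aj))\dj+O(|D|)$, so the cross part is $-2\pi\sigma|\log\d|\sum_j(1-\xi_0(\aj))\dj+O(|D|^2)$, the degree-$1$ coefficient. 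Collecting the three parts gives \eqref{eq:l_expansion} (using that $|D|\le|D|^2$ for integer $D\neq0$, and absorbing the remaining $o(1)$ into $C|\log\d|^2$).

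\emph{Main obstacle.} The delicate point is to control all subleading terms uniformly in $D$: since $h_1$ reaches size $|\log\d|$ near the holes while the curves $\bd\odj$ have length $\sim\d$, one must check that the boundary integrals on $\bd\odj$ and the discrepancies between $\int_\O$ and $\int_{\Od}$ only affect the $O(1)$ parts of the coefficients. In particular, the correction $h_1-2\pi\sum_j\dj G_j$, although of size $|\log\d|$ near the holes, has to be shown to generate a field of size only $O(1/|\log\d|)$ away from them, so that its pairing with $\he=\sigma|\log\d|$ in the cross part remains bounded. This is exactly the sharpening of the $\d$-dependence that is absent from the fixed-radius analysis of \cite{AlaBro06}, and it is the principal technical hurdle.
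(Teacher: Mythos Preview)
Your proposal is correct, and the overall architecture---reduce to a scalar London problem for $h=\curl A$, split $h=\he\xi_0+(\text{vortex part})$, and compute the degree-$0$, $1$, $2$ coefficients separately---is exactly what the paper does. The execution, however, differs in two genuine respects.

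First, for the vortex part the paper does \emph{not} use the Dirichlet Green's functions $G_j$ of $-\Delta+1$ on $\O$. It instead builds an explicit ansatz $h_2=\sum_j \dj\,\theta_j\phi_j$ from truncated Bessel functions $K_0(|x-\aj|)$ multiplied by smooth cutoffs, and then carries a remainder $h_3:=\hdd-h_1-h_2$ through a separate lemma. That lemma is where the paper pays the price you flag as the ``main obstacle'': it decomposes $h_3$ further, introduces basis solutions $\zeta_i$ of the London equation with $\zeta_i=\delta_{ij}$ on $\bd\odj$, and squeezes them between Bessel sub/supersolutions to get the $O(1/|\log\d|)$ bulk decay you need. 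Your Green's-function correction $h_1-2\pi\sum_j\dj G_j$ satisfies the same boundary-value problem (London equation, zero on $\bd\O$, $O(|D|)$ data on $\bd\odj$, negligible flux), so it is controlled by exactly the same $\zeta_i$-estimates; you have simply relocated the work, not avoided it.

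Second, your derivation of the linear coefficient is different and cleaner. The paper integrates by parts onto $\bd\odj$ and reads the coefficient off the normal derivative $\partial_\nu h_2\approx \dj/\d$ times $(\he\xi_0(\aj)-\he)\cdot 2\pi\d$. You instead collapse the cross term to $-\he\int_{\Od}h_1$ and invoke the duality $\int_\O G_j=1-\xi_0(\aj)$, which is a nice identity the paper does not use. Both routes land on $-2\pi\sigma|\log\d|\sum_j(1-\xi_0(\aj))\dj$; yours gets there with less boundary bookkeeping, at the cost of needing the bulk decay of the correction (again the $\zeta_i$-estimate) to control $\he\int_\O(h_1-2\pi\sum_j\dj G_j)$. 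The paper's route localises that same difficulty to a $\d$-circle, which is why it can get away with only $L^\infty$ bounds on $h_3$ and $\partial_\nu h_3$ on $\bd\odj$.
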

\begin{proof_splt}
  The main idea of the proof is to approximate the induced magnetic field $\hdd=\curl{\Add}$ as a
  sum of functions that depend on external magnetic field and the
  prescribed degrees on the holes, respectively. First, prescribe the
  degrees of the order parameter
  \begin{equation}
  \label{eq:18}
    \deg(u, \bd\odj) = \dj, \quad j = 1\dots N
  \end{equation}
  and write down the Euler-Lagrange equation for \eqref{eq:S1_func} in
  terms of the induced magnetic field $h = \curl{A}$ with the
  corresponding boundary conditions:
  \begin{equation}\label{eq:magn_field}
    \begin{cases}
      -\Lap h + h = 0, &\text{ in } \Od, \\
      h = \he, &\text{ on } \bd\O, \\
      h = \Hj, &\text{ in } \odj, \quad j = 1\dots N, \\
      -\int_{\bd\odj} \frac{\pd h}{\partial \nu} ds = 2\pi\dj -
      \int_{\odj} h dx, &\ j = 1\dots N.
    \end{cases}
  \end{equation}
  The constants $H_j$ are a priori unknown and are defined through the
  solution $\hdd = \hd(D)$ of \eqref{eq:magn_field} where
  $D = (D^1, \dots, D^N)$ is the vector of the prescribed degrees. The
  energy \eqref{eq:S1_func} of the minimizer $\Par{\udd, \Add}$ can be
  expressed in terms of $\hdd$:
  \begin{equation}\label{eq:S1_func_h}
    GL_\d\left[\udd, \Add\right]
    = GL_\d[\hdd]
    = \frac12\int_{\Od}|\nabla\hdd|^2dx + \frac12\int_{\O}(\hdd - \he)^2dx\,.
  \end{equation}
  Decompose the solution of \eqref{eq:magn_field} $\hdd$ into
  \begin{equation}\label{eq:magn_field_decomposition}
    \hdd = h_1 + h_2 + h_3,
  \end{equation}
  where $h_1$ captures the influence of the external field $\he$,
  $h_2$ takes into account the hole vortices, and $h_3$ is the
  remainder. More precisely,
  \begin{equation}
    h_1 = \he\xi_0,
  \end{equation}
  where $\xi_0$ solves the boundary value problem \eqref{eq:xi_0} in
  the domain $\O$ with no holes:
  \begin{equation}\label{eq:xi_0_1}
    \begin{cases}
      -\Lap \xi_0 + \xi_0 = 0 &\text{ in } \O, \\
      \xi_0 = 1 &\text{ on } \bd\O.
    \end{cases}
  \end{equation}
  The function $h_2$ is defined by
  \begin{equation}\label{eq:h_2}
    h_2(x) 
    = \sum_{j=1}^N \dj \theta_j(x)\phi_j(x)
  \end{equation}
  where $\dj$ are as in \eqref{eq:18}. Here
  $$\theta_j(x) = \theta(x - \aj),\ j=1,\ldots,N$$ and $\theta$ is a truncated modified Bessel function of
  the second kind
  \begin{equation}
    \theta(x) =
    \begin{cases}
      K_0(\d), &|x| \leq \d, \\
      K_0(|x|), &|x| > \d.
    \end{cases}
  \end{equation}
  The cutoff function $\phi_j(x) = \phi(x - \aj) \in C^{\infty}(\R^2)$
  satisfies
  \begin{equation}
    \phi(x) =
    \begin{cases}
      1, &|x| \leq R/4, \\
      0, &|x| \geq R/2,
    \end{cases}
  \end{equation}
  with $R$ being defined as the largest radius for which $B(\aj, R)$,
  $j = 1\dots N$ intersect neither each other nor the boundary
  $\bd\O$. Here the choice of $K_0(|x|)$ is motivated by the fact that
  it is a fundamental solution of the equation
  $-\Lap u + u = 2\pi\delta(x)$ in $\R^2$. Note that $h_2$ solves the
  following problem:
  \begin{equation}\label{eq:h_2^i_bvp}
    \begin{cases}
      -\Lap h_2 + h_2 = \sum_{j=1}^{N}\dj\Bra{-\Lap + I}\Par{\theta_j\phi_j}, &\text{ in } \Od, \\
      h_2 = 0, &\text{ on } \bd\O, \\
      h_2 = \dj K_0(\d), &\text{ on } \bd\odj, \quad j = 1\dots N, \\
      -\int_{\bd\odj} \frac{\pd h_2}{\partial \nu} ds = 2\pi\dj - \dj
      K_0(\d)|\odj| + \dj O(\delta^2), &\ j = 1\dots N.
    \end{cases}
  \end{equation}
  Since for each $j=1,\ldots,N$ the function
  $f_j(x) := \Bra{-\Lap + I}\Par{\theta_j\phi_j}$ is nonzero only
  inside the annular region $T_j: = \Tj$ that does not intersect any
  of the holes, the functions $f_j,\ j=1,\ldots,N$ are smooth and
  finite. Thus, for every $j=1,\ldots,N$, the function $h_2$ has the
  degree $\dj$ on the hole $\odj$ and $\theta_j\phi_j$ is constant on
  $\odj$ and decays to zero on $\bd B(\aj, R/2)$.

  Next, we show that the contribution of the remainder
  $h_3 = h - h_1 - h_2$ to the energy is small, hence the interaction
  between the hole vortices contributes a negligible amount to the
  energy. This provides a justification for treating each hole vortex
  as being independent from the other hole vortices.
		
  We deduce the boundary value problem for $h_3$ from the original
  problem \eqref{eq:magn_field}, the problem \eqref{eq:xi_0} for
  $h_1 = \he\xi_0$, and the expression \eqref{eq:h_2} for $h_2$ to
  obtain:
  \begin{equation}\label{eq:h_3}
    \begin{cases}
      -\Lap h_3 + h_3 = -\sum_{j=1}^N \dj\fj(x), &\text{ in } \Od, \\
      h_3 = 0, &\text{ on } \bd\O, \\
      h_3 = \Htj - \he(\xi_0(x) - \xi_0(\aj)), &\text{ on } \bd\odj, \quad j = 1\dots N, \\
      -\int_{\bd\odj} \frac{\pd h_3}{\partial \nu} ds = -\Htj|\odj| +
      \dj O(\d^2) + O(\d^3\log\d), &\ j = 1\dots N.
    \end{cases}
  \end{equation}
  where $\Htj = \Hj - \he\xi_0(\aj) - \dj K_0(\d)$ are the unknown
  constants. The next lemma establishes the necessary estimates for
  $h_3$.
\end{proof_splt}

\begin{lemma}
  The solution $h_3$ of \eqref{eq:h_3} satisfies the following
  estimates:
  \begin{align}
    \|h_3\|_{L^\infty(\O)}
    &\leq C_1\d|\log\d|^2 + C_2|D|, \\
    \|\nabla h_3\|_{L^\infty(\O)}
    &\leq C_1|\log\d|^2 + C_2|D||\log\d|, \\
    \Abs{\frac{\pd h_3}{\pd \nu}}
    &\leq C_1|\log\d| + C_2|D|
      \text{ on }
      \bd\odj
      \text{ for all }
      j = 1\dots N.
  \end{align}
\end{lemma}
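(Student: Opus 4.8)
The three bounds are linear in the data, so the plan is to split $h_3$ according to the three sources of inhomogeneity in \eqref{eq:h_3} — the interior forcing $-\sum_j\dj\fj$, the oscillatory part $-\he(\xi_0-\xi_0(\aj))$ of the boundary data on each $\bd\odj$, and the unknown constants $\Htj$ — estimate the first two by the maximum principle and explicit small‑argument Bessel asymptotics, and then use the flux conditions of \eqref{eq:h_3}, which are the crucial structural ingredient, to control both the size of the $\Htj$ and the strength of the $K_0$–type singularity of $h_3$ at each hole. Throughout one uses $K_0(\d)=|\log\d|+O(1)$, $\d K_1(\d)=1+O(\d^2|\log\d|)$, $I_0(\d)=1+O(\d^2)$, $I_1(\d)=O(\d)$, and the maximum principle for $-\Lap+I$.

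First write $h_3=g_{\mathrm{src}}+g_{\mathrm{var}}+\sum_j\Htj v_j$, where $g_{\mathrm{src}}$ carries the forcing $-\sum_j\dj\fj$ with homogeneous Dirichlet data; $g_{\mathrm{var}}=\sum_j z_j$ with $z_j$ solving $(-\Lap+I)z_j=0$ in $\Od$, $z_j=-\he(\xi_0-\xi_0(\aj))$ on $\bd\odj$ and $z_j=0$ on the remaining components of $\bd\Od$; and $v_j$ is the modified capacitary potential, $(-\Lap+I)v_j=0$ in $\Od$, $v_j=\delta_{jk}$ on $\bd\omega_\d^k$, $v_j=0$ on $\bd\O$. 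Since each $\fj=(-\Lap+I)(\theta_j\phi_j)$ is smooth, of size $O(1)$, and supported in the fixed annulus $T_j$ that is separated from every hole, the maximum principle gives $\|g_{\mathrm{src}}\|_\infty=O(|D|)$ (one also has the exact identity $g_{\mathrm{src}}=\sum_j\dj K_0(\d)v_j-\sum_j\dj\theta_j\phi_j$, both sides solving the same homogeneous problem because $\phi_k\equiv0$ near $\bd\O$ and near every hole $\neq k$, while $\phi_j\equiv1$ on $\bd\odj$). The data of $z_j$ has $L^\infty$–norm $O(\he\d)=O(\d|\log\d|)$, so $\|g_{\mathrm{var}}\|_\infty=O(\d|\log\d|)$, and the constants will be shown to satisfy $|\Htj|=O(|D|+\d|\log\d|^2)$; adding the three pieces gives the first estimate.

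The constants $\Htj$ are fixed by substituting the splitting into the $N$ flux conditions of \eqref{eq:h_3}. Computing $\int_{\bd\odj}\pd_\nu v_k\,ds$ — diagonal entries of order $|\log\d|^{-1}$, off‑diagonal of order $|\log\d|^{-2}$ — turns these into a linear system for $(\Htj)_j$ which, after multiplication by $|\log\d|$, is a small perturbation of the identity with right‑hand side $O(|D|)$; inverting gives $|\Htj|=O(|D|+\d|\log\d|^2)$. The flux conditions carry a second, finer, piece of information: the flux of $h_3$ through each $\bd\odj$ is $O(|D|\d^2+\d^3|\log\d|)$, and since this flux is $2\pi\d$ times the radial derivative of the zeroth Fourier mode of $h_3$ near $\aj$ — which, to leading order, is $-2\pi$ times the coefficient of $K_0(|x-\aj|)$ in that mode — the $K_0$–singular part of $h_3$ near each hole is itself only $O(|D|\d^2+\d^3|\log\d|)$. (This is exactly the role of the construction of $h_2$: it absorbs the $K_0$–singularity so that $h_3$ has essentially none.) Consequently, near $\odj$ the function $h_3$ is, up to a negligible multiple of $K_0(|x-\aj|)$, a superposition of $I_0(|x-\aj|)$ — with amplitude controlled by $\|h_3\|_\infty$ — and of the higher Fourier modes forced by $-\he(\xi_0-\xi_0(\aj))$ (zeroth mode $O(\he\d^2)$, dipole mode $O(\he\d)$); all of these have gradient $O(|D|+\he)=O(|D|+|\log\d|)$ on and near $\bd\odj$.

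For the gradient estimate one then uses the preceding paragraph near the holes, while away from the holes $h_3$ solves an elliptic equation with $O(|D|)$ forcing and is $O(|D|+\d|\log\d|^2)$ in sup norm, with $\bd\O$ smooth and hole‑free, so standard interior and boundary Schauder estimates apply; the normal‑derivative bound on each $\bd\odj$ is read off from the same mode expansion. The main obstacle is the flux‑condition step: the identities must be expanded one order beyond the leading $|\log\d|^{-1}$ term, with explicit control of the $O(\d^2|\log\d|)$ Bessel‑expansion errors and of the small inter‑hole couplings, because it is the second (fine) consequence above — that the $K_0$–singular part of $h_3$ at each hole is negligible — rather than the crude bound on $\Htj$, that produces the gradient and normal‑derivative estimates; one also has to keep in mind that the pieces $g_{\mathrm{src}}$ and $\sum_j\Htj v_j$ individually carry $K_0$–singularities of size $O(|D|/|\log\d|)$ that cancel only once the $\Htj$ have been determined, so they must not be estimated separately.
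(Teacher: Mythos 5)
Your argument follows the same skeleton as the paper's: the splitting $h_3=g_{\mathrm{src}}+g_{\mathrm{var}}+\sum_j\Htj v_j$ is, up to a regrouping of constants, the paper's $h_3=\sum_j\dj\eta_j+\eta_0+\sum_j c_j\zeta_j$ with $c_j=\Htj-\eta(\aj)$ and $v_j=\zeta_j$; the $L^\infty$ bounds by the maximum principle and the determination of the constants from the flux conditions via a diagonally dominant linear system (diagonal fluxes $\sim|\log\d|^{-1}$, off-diagonal $\sim|\log\d|^{-2}$, which the paper establishes by explicit Bessel sub- and supersolutions and you assert) are identical. Where you genuinely diverge is in the gradient and normal-derivative estimates. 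The paper obtains these from the rescaling estimate of Lemma \ref{lem:grad_est} in the appendix, $\|\nabla u\|_{L^\infty}\lesssim\d^{-1}\|u\|_{L^\infty}+\dots$, applied to $\eta_0$ and to $\zeta$, which is exactly what produces the stated $C_1|\log\d|^2+C_2|D||\log\d|$; you instead use the flux conditions a second time to show that the $K_0$-singular coefficient of $h_3$ at each hole is only $O(|D|\d^2)$ (the correct observation that $h_2$ has already absorbed the logarithmic singularity) and then read the gradient off a Fourier--Bessel expansion in the annulus $\d<|x-\aj|<R/4$. If completed, your route gives the sharper bound $O(|\log\d|+|D|)$, which implies the stated one. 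The one place you are under-specified is the mode expansion itself: you account only for the zeroth and dipole modes forced by $-\he(\xi_0-\xi_0(\aj))$, whereas the gradient on $\bd\odj$ also receives contributions from all higher modes --- the $K_n$ parts fixed by the Dirichlet trace on $\bd\odj$ (each of size $O(n\he\d^{n-1})$) and the $I_n$ parts fed in by the values of $h_3$ at the outer radius of the annulus, which are controlled only by $\|h_3\|_{L^\infty}$ and not by $\xi_0$ (each of size $O\bigl(n\|h_3\|_{L^\infty}(\d/R)^{n-1}/R\bigr)$); both series converge, but this summation must be carried out, or replaced by a scaled Schauder estimate in the dyadic annuli, to close the argument. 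This is a gap of rigor rather than a wrong step, and the remaining cancellation you flag at the end (the individual $K_0$-singularities of $g_{\mathrm{src}}$ and $\sum_j\Htj v_j$ cancelling only after the $\Htj$ are determined) is handled correctly.
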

\begin{proof}
  We begin by splitting \eqref{eq:h_3} into several
  subproblems. First, let $\eta = \sum_{j=1}^N \dj\eta_j$ be a
  solution of the nonhomogeneous equation in \eqref{eq:h_3}, where
  $\eta_j$ solves
  \begin{equation}\label{eq:eta_j}
    \begin{cases}
      -\Lap \eta_j + \eta_j = -\Bra{-\Lap + I}\Par{\theta_j\phi_j}\Ind_{T_j}, &\text{ in } \O, \\
      \eta_j = 0, &\text{ on } \bd\O,
    \end{cases}
  \end{equation}
  for every $j=1,\ldots,N$. Here $\eta_j,\ j=1,\ldots,N$ are smooth and
  do not depend on $\d$. Next, introduce $\eta_0$ that both solves
  the homogeneous equation and satisfies the conditions on $\bd\odj$
  in \eqref{eq:h_3} to give
  \begin{equation}\label{eq:eta_0}
    \begin{cases}
      -\Lap \eta_0 + \eta_0 = 0, &\text{ in } \Od, \\
      \eta_0 = 0, &\text{ on } \bd\O, \\
      \eta_0 = -\he(\xi_0(x) - \xi_0(\aj)) - (\eta(x) - \eta(\aj)),
      &\text{ on } \bd\odj, \quad j = 1\dots N.
    \end{cases}
  \end{equation}
  Note that, by the Maximum Principle,
  \begin{equation}\label{eq:est_eta_0}
    \|\eta_0\|_{L^\infty} \leq C\d(|\log\d| + \max_j|\dj|).
  \end{equation}
  Lemma \ref{lem:grad_est} provides the estimate on the gradient of
  $\eta_0$ of the form
  \begin{equation}\label{eq:est_grad_eta_0}
    \|\nabla\eta_0\|_{L^\infty} \leq C(|\log\d| + \max_j|\dj|).
  \end{equation}
  The remainder $\zeta = h_3 - \sum_{j=0}^{N}\eta_j$ solves the
  following system:
  \begin{equation}\label{eq:zeta}
    \begin{cases}
      -\Lap \zeta + \zeta = 0, &\text{ in } \Od, \\
      \zeta = 0, &\text{ on } \bd\O, \\
      \zeta = \cj, &\text{ on } \bd\odj, \quad j = 1\dots N, \\
      -\int_{\bd\odj} \frac{\pd \zeta}{\partial \nu} ds = -|\odj|\cj +
      \Adj, &\ j = 1\dots N,
    \end{cases}
  \end{equation}
  where $\cj = \Htj - \eta(\aj)$ are unknown constants and
  $\Adj = |D| O(\d) + O(\d\log\d)$ is an error. The first three
  equations in \eqref{eq:zeta} set up the boundary value problem for
  $\zeta$ with the unknown boundary values $\cj$. The fourth line in
  \eqref{eq:zeta} gives the system of $N$ equations for $N$ unknowns
  $\cj$. Since the boundary value problem for $\zeta$ is linear, we
  start with the estimates for the basis functions $\zeta_i$ that
  solve the problem
  \begin{equation}\label{eq:zeta_i}
    \begin{cases}
      -\Lap \zeta_i + \zeta_i = 0, &\text{ in } \Od, \\
      \zeta_i = 0, &\text{ on } \bd\O, \\
      \zeta_i = \delta_{ij}, &\text{ on } \bd\odj, \quad j = 1\dots N,
    \end{cases}
  \end{equation}
  for every $i=1,\ldots,N$. Then, using representation
  $\zeta = \sum_i \ci\zeta_i$, we will solve the linear system for
  $\ci$.
		
  We use the method of sub- and supersolutions to get estimates for
  $\zeta_i$. By the Maximum Principle, we have that
  $0 \leq \zeta_i \leq 1$ for every $i=1,\ldots,N$. In the case of a
  radially symmetric domain with one hole at the center, the solutions
  of \eqref{eq:zeta_i} are the modified Bessel functions. We show that
  they provide a good approximation for $\zeta_i$. First, fix
  $i \in 1\dots N$ and construct a supersolution for $\zeta_i$. Take
  $\Rma > 0$ such that $\O \in B(\ai, \Rma)$ and set
  \begin{equation}
    \zisp
    = \frac{K_0\Par{\frac{|x - \ai|}{\Rma}}}{K_0\Par{{\frac{\d}{\Rma}}}}.
  \end{equation}
  The function $\zisp$ is strictly positive in $\Od$, equals $1$ on
  $\bd\odi$, and has $\Bra{-\Lap + I}\zisp = 0$. Therefore it
  satisfies
  \begin{equation}
    \begin{cases}
      -\Lap\zisp + \zisp = 0 &\text{ in } \Od, \\
      \zisp > 0 &\text{ on } \bd\O, \\
      \zisp = 1 &\text{ in } \odi, \\
      \zisp > 0 &\text{ in } \odj,\ j \neq i, \quad j = 1\dots N,
    \end{cases}
  \end{equation}
  and is thus a supersolution. This yields the bound
  \begin{equation}\label{eq:sup_sln_ineq}
    0 \leq \zeta_i \leq \zisp\text{ in } \O, \quad i = 1\dots N.
  \end{equation}
  Next, we construct a subsolution. Take $\Rmi > 0$ such that
  $B(\ai, 2\Rmi) \in \Od$ for every $i = 1\dots N$ and set
  \begin{equation}
    \zisb
    = \frac{K_0\Par{\frac{|x - \ai|}{\Rmi}}}{K_0\Par{\frac{\d}{\Rmi}}}
  \end{equation}
  The Bessel function is a fundamental solution of
  $\Bra{-\Lap + I}u = \delta(x)$ and it is decreasing, therefore
  $\zisb$ is negative outside $B(\ai, \Rmi)$. Thus it satisfies
  \begin{equation}
    \begin{cases}
      -\Lap\zisb + \zisb = 0 &\text{ in } \Od, \\
      \zisb < 0 &\text{ on } \bd\O, \\
      \zisb = 1 &\text{ in } \odi, \\
      \zisb < 0 &\text{ in } \odj,\ j \neq i, \quad j = 1\dots N,
    \end{cases}
  \end{equation}
  and is thus a subsolution. This, together with
  \eqref{eq:sup_sln_ineq}, implies that
  \begin{equation}
    \max(0, \zisb) \leq \zeta_i \leq \zisp,
  \end{equation}
  for every $i = 1\dots N$, giving a very sharp description of the
  behavior of $\zeta_i$ near $i$th hole. Note that, for
  $x \in \bd\odi$, we have
  \begin{equation}
    \frac{L_1}{\d\log\d}
    \leq \frac{\pd\zisb}{\pd\nu}(x)
    \leq \frac{\pd\zisp}{\pd\nu}(x)
    \leq \frac{L_2}{\d\log\d}
  \end{equation}
  with $L_1, L_2 > 0$, therefore
  \begin{equation}\label{eq:precise_behavior}
    \frac{\pd\zeta_i}{\pd\nu}(x)
    \sim \frac{1}{\d\log\d}
    \text{ on }
    \bd\odi.
  \end{equation}
  To estimate the normal derivative of $\zeta_i$ on $\bd\odj$ for
  $j \neq i$ we need a better supersolution that captures the
  appropriate Dirichlet boundary conditions. Outside of
  $B(\ai, \Rmi)$, we have
  \begin{equation}
    |\zeta_i(x)|
    \leq \frac{K_0\Par{\frac{\Rmi}{\Rma}}}{K_0\Par{{\frac{\d}{\Rma}}}}
    \leq C_R|\log\d|^{-1}.
  \end{equation}
  Construct $\zijsp$ that solves the following conditions:
  \begin{equation}\label{eq:zijsp}
    \begin{cases}
      -\Lap\zijsp + \zijsp = 0 &\text{ in } B(\aj, \Rmi) \setminus \overline{B(\aj, \d)}, \\
      \zijsp = C_R|\log\d|^{-1} &\text{ on } \bd B(\aj, \Rmi), \\
      \zijsp = 0 &\text{ on } \bd\odj.
    \end{cases}
  \end{equation}
  This problem is radially symmetric in
  $B(\aj, \Rmi) \setminus \overline{B(\aj, \d)}$. The function
  \begin{equation}
    \zijsp = C_1 I_0(r) + C_2 K_0(r),
    \quad
    r = |x - \aj|
  \end{equation}
  with
  \begin{equation}
    C_1 \sim -|\log\d|^{-1}
    \quad\text{ and }\quad
    C_2 \sim |\log\d|^{-2}.
  \end{equation}
  satisfies \eqref{eq:zijsp} because the modified Bessel functions
  $I_0$ and $K_0$ behave as $1$ and $-\log r$, respectively, near the
  origin. Therefore
  \begin{equation}
    0
    \leq \frac{\pd\zeta_i}{\pd\nu}
    \leq \frac{\pd\zijsp}{\pd\nu}
    = \frac{C_{ij}}{\d|\log\d|^2}
    \text{ on }
    \bd\odj.
  \end{equation}
  As a result
  \begin{equation}\label{eq:approx_behavior}
    \int_{\bd\odj} \Abs{\frac{\pd \zeta_i}{\partial \nu}} ds
    \leq \frac{C}{|\log\d|^2}.
  \end{equation}
  for all $i \neq j$. Combining the estimates on the behavior of
  $\zeta_i$ on $\bd\odi$ in \eqref{eq:precise_behavior} with
  \eqref{eq:approx_behavior} and estimating the constants $\ci$ using
  the fourth equation in \eqref{eq:zeta} we find:
  \begin{align}
    \pi\d^2|c_i| + \Abs{A_i^\d}
    \geq \Abs{\int_{\bd\odi} \frac{\pd \zeta}{\partial \nu} ds}
    \nonumber &\geq \Abs{\ci\int_{\bd\odi} \frac{\pd \zeta_i}{\partial \nu} ds} - \sum_{j \neq i}\Abs{\cj\int_{\bd\odi} \frac{\pd \zeta_j}{\partial \nu} ds} \\
              &\geq |c_i| \frac{C_1}{|\log\d|} - \sum_{j \neq i}^{N} |c_j| \frac{C_2}{|\log\d|^2}
  \end{align}
  or
  \begin{equation}
    |c_i| \Par{\frac{C_1}{|\log\d|}-\pi\d^2} - \sum_{j \neq i}^{N} |c_j| \frac{C_2}{|\log\d|^2}
    \leq \Abs{A_i^\d},
  \end{equation}
  with some positive $C_1, C_2 > 0$ for all $i = 1\dots N$. The
  coefficient matrix is a small perturbation of the identity matrix,
  up to the factor $C_1|\log\d|^{-1}$. This allows us to conclude that
  \begin{equation}\label{eq:ci_estimate}
    |\ci|
    \leq |D|O(\d\log\d) + O(\d\log^2\d)
  \end{equation}
  for all $i = 1\dots N$. Let
  \begin{equation}
    \ci = \max_j |\cj|.
  \end{equation}
  Then
  \begin{equation}
    |\ci|
    \leq \Abs{A_i^\d}\Par{\frac{C_1}{|\log\d|} - \pi\d^2 - (N-1)\frac{C_2}{|\log\d|^2}}^{-1}
    \leq |D|O(\d\log\d) + O(\d\log^2\d),
  \end{equation}
  hence
  \begin{equation}
    \|\zeta\|_{L^\infty(\Od)} \leq \sum_j |\cj| \leq C_1|D|\d|\log\d| + C_2\d|\log\d|^2.
  \end{equation}
  The statement of the lemma for
  \begin{equation}
    h_3 = \eta_0 + \sum_{j=1}^{N}\dj\eta_j + \sum_{j=1}^{N}\cj\zeta_j
  \end{equation}
  then follows once we combine the estimates above.
\end{proof}

\begin{proof}[Proof of Theorem \ref{thm:S1}, continued]
  We are now able to find the asymptotics for the energy
  $l_\d(D) = GL_\d[\hdd]$:
  \begin{align}\label{eq:energy_splitted_into_h}
    l_\d(D)
    \nonumber &= GL_\d[h_1 + h_2 + h_3] \\
    \nonumber &= \frac12\int_{\Od}|\nabla h_1|^2dx + \frac12\int_{\Od}|\nabla h_2|^2dx + \frac12\int_{\Od}|\nabla h_3|^2dx \\
    \nonumber &+ \frac12\int_{\O}(h_1 - \he)^2dx + \frac12\int_{\O}h_2^2dx + \frac12\int_{\Od}h_3^2dx \\
    \nonumber &+ \int_{\Od}\Bra{\nabla(h_1 - \he)\cdot\nabla\widehat{h} + (h_1 - \he)\widehat{h}}dx
                + \int_{\Od}\Bra{\nabla h_2 \cdot \nabla h_3 + h_2 h_3}dx \\
              &+ |D|^2 O(\d^2|\log\d|^3) + O(\d^2|\log\d|^3),
  \end{align}
  where $\widehat{h} = h_2 + h_3$ and the integrals over holes $\odj$
  are the source of the error. Next, we estimate each term in
  \eqref{eq:energy_splitted_into_h}. The terms that involve $h_1$ only
  do not depend on the degrees of the hole vortices and thus they do
  not play a role in the minimization of $l_\d(D)$:
  \begin{align}
    \frac12\int_{\Od}|\nabla h_1|^2dx + \frac12\int_{\O}(h_1 - \he)^2dx
    \nonumber &= \heS\frac12\int_{\Od}|\nabla\xi_0|^2dx + \heS\frac12\int_{\O}(1 - \xi_0)^2dx \\
              &= O(|\log\d|^2).
  \end{align}
  The gradient of $h_2$ gives the main quadratic term:
  \begin{align}
    \frac12\int_{\Od}|\nabla h_2|^2dx
    \nonumber &= \frac12\sum_{j=1}^{N} \djp^2 \int_{T_j} |\nabla(\theta_j(x)\phi_j(x))|^2dx \\
    \nonumber &= \pi\sum_{j=1}^{N} \djp^2 \Bra{\int_{\d}^{R/4} |K_0(r)'|^2rdr + \int_{R/4}^{R} \Abs{\frac{d}{dr}(K_0(r)\phi(r))}^2rdr } \\
              &= \pi\sum_{j=1}^{N} \djp^2 \Bra{\int_{\d}^{R/4} \Abs{-\frac1r + O(r\log{r})}^2rdr + O(1) } \\
              &= \pi\sum_{j=1}^{N} \djp^2 |\log{\d}| + |D|^2 O(1).
  \end{align}		
  The $L^2$-norm of $h_2$ is much smaller, indeed:
  \begin{align}
    \frac12\int_{\O}h_2^2dx
    = \pi\sum_{j=1}^{N}\Par{\dj}^2\int_{0}^{R/2}|\theta_j\phi|^2rdr
    = |D|^2 O(1).
  \end{align}		
  We now estimate the integral involving $\widehat{h}$ that gives the
  linear terms in terms of the degrees. Note that, since $\hdd$ and
  $h_1$ solve the homogeneous equation $\Bra{-\Lap + I}h = 0$, then so
  does their difference $\widehat{h} = \hdd - h_1$:
  \begin{align}
    \left\langle h_1 - \he, \widehat{h} \right\rangle_{H^1(\Od)}
    \nonumber &= \int_{\Od}(h_1 - \he)\Par{-\Lap\widehat{h} + \widehat{h}}dx
                - \int_{\bd\Od}(h_1 - \he)\frac{\pd\widehat{h}}{\pd\nu}ds \\
    \nonumber &= \sum_{j=1}^{N}\int_{\bd\odj}(h_1 - \he)\frac{\pd(h_2 + h_3)}{\pd\nu}ds \\
    \nonumber &= \sum_{j=1}^{N}\int_{\bd\odj}(h_1 - \he)\Bra{\dj\Par{\frac{1}{\d} + O(\d\log\d)} + O(\log\d) + |D|O(1)}ds \\
    \nonumber &= \sum_{j=1}^{N}\dj(h_1(\aj) - \he) 2\pi\d \cdot \frac{1}{\d} + O(\d|\log\d|^2) + |D|O(\d\log\d) \\
              &= - 2\pi\sigma|\log\d|\sum_{j=1}^{N}\dj(1 - \xi_0(\aj)) + O(\d|\log\d|^2) + |D|O(\d\log\d),
  \end{align}
  where use the notation
  $\left\langle u, v \right\rangle_{H^1} = \int \Bra{\nabla u \cdot
    \nabla v + uv}dx$. The other terms in
  \eqref{eq:energy_splitted_into_h} are small and are estimated using
  integration by parts:
  \begin{align}
    \|h_3\|_{H^1(\Od)}^2
    \nonumber &= \int_{\Od}h_3\Par{-\Lap h_3 + h_3}dx
                - \int_{\bd\Od}h_3 \frac{\pd h_3}{\pd\nu}ds
                = \sum_{j=1}^{N}\int_{\bd\odj}h_3\frac{\pd h_3}{\pd\nu}ds \\
    \nonumber &= C\d\Par{C_1\d|\log\d|^2 + C_2|D|}\Par{C_1|\log\d| + C_2|D|} \\
              &= O(\d^2|\log\d|^3) + |D|^2 O(\d|\log\d|) \\
    \left\langle h_2, h_3 \right\rangle_{H^1(\Od)}
    \nonumber &= \int_{\Od} h_2 \Par{-\Lap h_3 + h_3}dx
                - \int_{\bd\Od}h_2\frac{\pd h_3}{\pd\nu}ds
                = \sum_{j=1}^{N}\int_{\bd\odj}h_2\frac{\pd h_3}{\pd\nu}ds \\
    \nonumber &= \sum_{j=1}^{N} 2\pi\delta\dj K_0(\delta) \Par{C_1|\log\d| + C_2|D|} \\
              &= |D|^2 O(\d|\log\d|^2)
  \end{align}
  Combining all of the above estimates, we obtain the asymptotic
  expansion \eqref{eq:l_expansion}.
\end{proof}
\begin{corollary}\label{_cor:min_dj}
  The leading part of the energy $l_\d(Z)$ is a sum of $N$
  one-dimensional parabolas with the vertices at
  \begin{equation}
    Z_j = \sigma(1 - \xi_0(\aj))\in\mathbb R.
  \end{equation}
  Since the degrees are integer-valued, the minimizing degrees $\dj$
  are the integers, closest to $Z_j$:
  \begin{equation}\label{eq:min_dj}
    \dj = \left\llbracket\sigma(1 - \xi_0(\aj))\right\rrbracket,
  \end{equation}
  where $\llbracket x \rrbracket$ denotes the integer nearest to $x$.
\end{corollary}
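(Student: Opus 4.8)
The plan is to read the minimizing vector off directly from the expansion \eqref{eq:l_expansion} of Theorem \ref{thm:S1}. First I would complete the square in each variable $\dj$, rewriting the leading part of \eqref{eq:l_expansion} as
\begin{equation*}
  l_\d(D) = \pi|\log\d|\sum_{j=1}^N \Par{\dj - Z_j}^2 + E_\d + |D|^2\,O(1),
  \qquad Z_j := \sigma\Par{1 - \xi_0(\aj)},
\end{equation*}
where $E_\d := C|\log\d|^2 - \pi|\log\d|\sum_{j=1}^N Z_j^2$ does not depend on $D$, and where each $Z_j$ is a constant independent of $\d$ because $\sigma$ is fixed by \eqref{eq:uniqueness_cond} and $\xi_0$ solves \eqref{eq:xi_0}. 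This exhibits the $D$-dependence of the leading term as a sum of $N$ decoupled one-dimensional parabolas with vertices at $Z_j$, which is the first assertion of the corollary.

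The next step is an a priori bound on the minimizer $\dd$ that is uniform in $\d$. Comparing $\dd$ with the trivial competitor $D = 0$ gives $l_\d(\dd) \le l_\d(0) = C|\log\d|^2$, and after completing the square and dividing by $|\log\d|$ this reads
\begin{equation*}
  \pi\sum_{j=1}^N \Par{\ddj - Z_j}^2 \le \pi\sum_{j=1}^N Z_j^2 + \frac{|\dd|^2\,O(1)}{|\log\d|}.
\end{equation*}
For $\d$ small the coefficient of $|\dd|^2$ on the right is at most, say, $\tfrac{\pi}{2}$, and since $(\ddj - Z_j)^2 \ge (|\dd| - \max_k|Z_k|)^2$ for the index $j$ realizing $|\dd|$, this inequality forces $|\dd| \le M_0$ for a constant $M_0$ depending only on $\sigma$, $\O$, and $N$.

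With this bound, the minimization of $l_\d$ over $\Z^N$ may be restricted to the finite set $\Cur{D \in \Z^N \mid |D| \le M_0}$, on which the remainder obeys $|D|^2\,O(1) = O(1) = o(|\log\d|)$. Hence, for all sufficiently small $\d$, a vector $D$ minimizes $l_\d$ over $\Z^N$ if and only if it minimizes $\sum_{j=1}^N (\dj - Z_j)^2$ over $\Z^N$; this problem separates into $N$ independent scalar minimizations, each solved by the integer nearest to $Z_j$, which is exactly \eqref{eq:min_dj}. The minimizer is unique precisely when no $Z_j = \sigma(1 - \xi_0(\aj))$ lies in $\Z + \tfrac12$, i.e. when $\sigma \notin \Sigma$, matching the description of $\Sigma$ in the Remark following Theorem \ref{thm:main}. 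I expect the only genuine obstacle to be the a priori bound of the second step: without it, a minimizer with unboundedly growing degrees could in principle make the error term $|D|^2\,O(1)$ compete with the parabolic term $\pi|\log\d|\sum_j(\dj - Z_j)^2$, and this possibility must be excluded before the reduction to the nearest-integer problem is legitimate.
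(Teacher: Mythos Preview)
Your proposal is correct and follows the same route as the paper: read the minimizing degrees directly off the quadratic expansion \eqref{eq:l_expansion} by completing the square. The paper in fact states this result as an immediate corollary with no separate proof, so your argument---in particular the a priori bound $|\dd|\le M_0$ that prevents the $|D|^2\,O(1)$ remainder from competing with the parabolic leading term---is more careful than what the paper itself provides.
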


\section{Energy Decomposition}\label{sec:energy_decoposition}

Since $\Par{\udd, \Add}$ is an admissible pair for the problem
\eqref{eq:min_prob}, we can use the representation of $S^1$-valued
energy \eqref{eq:l_expansion} with $D = 0$ to obtain an upper bound
\begin{equation}\label{eq:ub}
  GL_\d^\eps \Bra{\ude, \Ade}
  \leq GL_\d^\eps \Bra{u_{\delta 0}, A_{\delta 0}}
  \leq C|\log\d|^2
\end{equation}
on the energy of the minimizer of \eqref{eq:min_prob}. In order to
obtain a matching lower energy bound, we need to localize the regions
of the domain where the magnitude of the order parameter is small. To
this end, we use the following theorem.
\begin{theorem}[Ball Construction Method
  \cite{SanSer07}]\label{thm:bcm}
  For any $\alpha \in (0, 1)$ there exists $\eps_0(\alpha) > 0$ such
  that, for any $\eps < \eps_0$, if $(u, A)$ is a configuration such
  that $GL_\d^\eps[u, A] < \eps^{\alpha - 1}$, where $\eps$ is an
  inverse of the Ginzburg-Landau parameter, the following holds.
			
  For any $1 > \rho > C\eps^{\alpha / 2}$, where $C$ is a universal
  constant, there exists a finite collection of disjoint closed balls
  $\mathfrak{B} = \{B_i = B(\bi, r_i)\}_{i \in \index}$ such that
  \begin{enumerate}
  \item $r(\mathfrak{B}) = \rho$ where
    $r(\mathfrak{B}) = \sum_{i \in \index} r(B_i)$.
  \item Letting $V = \Od \cap \cup_{i \in \index} B_i$,
    \begin{equation}
      \Cur{ x \in \Od \ \mid\ ||u(x)| - 1| \geq \eps^{\alpha/4} } \subset V.
    \end{equation}
  \item Writing $\bdi = \deg(u, \bd B_i)$, if $B_i \subset \Od$ and
    $\bdi = 0$ otherwise,
    \begin{equation}
      \frac12\int_V\Bra{ |\nabla_A u|^2 + \rho^2|\curl{A}|^2 + \frac{1}{2\eps^2}(1 - |u|^2)^2 } dx
      \geq \pi \bdsum \Par{ \log{\frac{\rho}{\bdsum\eps}} - C },
    \end{equation}
    where $\bdsum = \sum_{i \in \index}|\bdi|$ is assumed to be
    nonzero and $C$ is a universal constant.
  \item There exists a universal constant $C$ such that
    \begin{equation}
      \bdsum \leq C\frac{GL_\d^\eps[u, A]}{\alpha|\log\eps|}.
    \end{equation}
  \end{enumerate}
\end{theorem}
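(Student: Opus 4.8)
This is the Sandier--Serfaty ball construction of \cite{SanSer07}, and the plan is to reproduce it with the book-keeping adapted to the perforated domain $\Od$. Set $\beta = \eps^{\alpha/4}$ and let $S = \Cur{x \in \Od : \Abs{\,|u(x)| - 1\,} \geq \beta}$ be the bad set. The bound $GL_\d^\eps[u,A] < \eps^{\alpha - 1}$ controls $\frac{1}{4\eps^2}\int_{\Od}(1-|u|^2)^2\,dx$, which together with a Vitali-type covering argument produces a finite collection of disjoint closed balls of total radius at most $C\eps^{\alpha/2}$ whose union contains $S$ and outside of which $|u|$ stays bounded away from $0$; this is the initial configuration $\mathfrak{B}_0$, and the hypothesis $\rho > C\eps^{\alpha/2}$ is exactly what guarantees there is room to grow it up to total radius $\rho$.

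The analytic core is the lower bound on a single annulus $A = B(b, s_2) \setminus \overline{B(b, s_1)}$ on which $u$ does not vanish. Writing $u = |u| e^{i\phi}$ locally, one has $|\nabla_A u|^2 \geq |u|^2 |\nabla \phi - A|^2$, and for a.e.\ $s \in (s_1, s_2)$ Cauchy--Schwarz on the circle $\bd B(b, s)$ gives
\begin{equation}
  \int_{\bd B(b,s)} |u|^2 |\nabla\phi - A|^2 \, d\ell
  \geq \frac{\Par{\int_{\bd B(b,s)} |u|^2 (\nabla\phi - A)\cdot\tau \, d\ell}^2}{\int_{\bd B(b,s)} |u|^2 \, d\ell}.
\end{equation}
By Stokes' theorem the numerator equals $\Par{2\pi d - \int_{B(b,s)} \curl A + O(\beta)}^2$, where $d$ is the degree carried by the ball, and the denominator is $2\pi s(1 + O(\beta))$; using $\Par{\int_{B(b,s)} \curl A}^2 \leq \pi s^2 \int_{B(b,s)} |\curl A|^2$ and $s \leq \rho$ one absorbs the magnetic contribution into the term $\rho^2 \int |\curl A|^2$ and, integrating in $s$, obtains
\begin{equation}
  \frac12 \int_{A \cap \Od} \Bra{ |\nabla_A u|^2 + \rho^2 |\curl A|^2 + \frac{1}{2\eps^2}(1 - |u|^2)^2 } dx
  \geq \pi |d| \log\frac{s_2}{s_1} - C|d|.
\end{equation}

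Next I would run the continuous growth-and-merging process: parametrize a family $\mathfrak{B}(t)$ of disjoint closed balls starting from $\mathfrak{B}_0$, dilating radii so that $\log r(\mathfrak{B}(t))$ increases at unit speed, and whenever two balls become tangent replacing them by a ball containing both whose radius is the sum of the two radii. This keeps $r(\mathfrak{B}(t))$ continuous and monotone, keeps the balls disjoint, and makes $\bdsum(t) = \sum_i |\bdi(t)|$ non-increasing, since degrees add upon merging. Applying the annulus bound over each interval free of mergers and the super-additivity of $\log$ at the merger times shows that $t \mapsto \frac12 \int_{V(t)} [\,\cdots\,]\,dx - \pi \bdsum(t) \log r(\mathfrak{B}(t))$ is non-decreasing; comparing $t = 0$, where $r(\mathfrak{B}_0) \leq C\eps^{\alpha/2}$ and $\bdsum(0)\eps$ is comparable to the scale of the initial balls, with the time at which $r(\mathfrak{B}(t)) = \rho$ yields item~3. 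Items~1 and~2 are then immediate (stop the process when $r = \rho$; the bad set was covered at $t = 0$ and the balls only grow), and item~4 follows by feeding item~3 and the hypothesis $GL_\d^\eps[u,A] < \eps^{\alpha-1}$ against each other: $\pi \bdsum(\log(\rho/(\bdsum\eps)) - C) \leq GL_\d^\eps[u,A]$, and since $\rho$ is a fixed power of $\eps$ this forces $\bdsum \leq C\, GL_\d^\eps[u,A] / (\alpha|\log\eps|)$.

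The main obstacle, and the reason the statement must be adapted from the fixed-domain version, is the perforated geometry: the initial balls and all intermediate balls must be kept much smaller than $\d$ and disjoint from the holes $\odj$, so that the alternative ``$B_i \subset \Od$'' in item~3 is the generic case and no spurious degree is produced by $\bd\odj$. This is what forces the two-parameter book-keeping, and, together with the standing assumption $|\log\eps| \gg |\log\d|$, it is precisely what allows $\rho$ to be chosen well below $\d$ yet still exponentially large compared to $\eps$. The remaining steps are a routine transcription of \cite{SanSer07}.
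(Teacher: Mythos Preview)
The paper does not prove this theorem: it is quoted as a black box from \cite{SanSer07} and invoked without argument, so there is no in-paper proof to compare against. Your sketch is a faithful outline of the Sandier--Serfaty construction and is essentially correct.

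One correction to your final paragraph: the perforated geometry is not an obstacle to \emph{this} theorem. The statement already handles balls not contained in $\Od$ by the convention in item~3 (assigning them degree zero), exactly as in the simply-connected case; the proof runs unchanged with $\Od$ in place of $\O$. The two-parameter book-keeping you describe---choosing $\rho$ well below $\d$ so that the balls are much smaller than the holes---is how the theorem is \emph{applied} later in the paper (where $\rho = \d^2/2$ is taken), not a constraint that must be built into the construction itself. So your claim that ``the initial balls and all intermediate balls must be kept much smaller than $\d$ and disjoint from the holes'' overstates what is needed here.
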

	
We consider now a domain with $N$ holes $\odj = B(a^j, \d)$ so that
$\Od = \O \setminus \cup_{j = 1}^{N} \odjo$. Set $\alpha = 1/2$ and
$\rho = \delta^2/2$ in the ball construction method. Assume that
$\eps$ is small enough so that $|u(x)| > 1 - \theta$ on
$\Od \cap \Par{ \cup_{i \in \index} B_i }$. The parameter $\theta$
will be chosen later, in Section \ref{sec:equality_of_degrees}.
	
\begin{lemma}\label{lem:energy_decomp}
  Let $(\ude, \Ade)$ be a minimizer of the problem
  \eqref{eq:min_prob}. Then the following energy decomposition holds:
  \begin{equation}\label{eq:energy_decomp}
    GL_\d^\eps[\ude, \Ade]
    = GL_\d[\udd, \Add] + F_\d[v, B]
    - \int_{\Od} \nabla^{\bot}\hdd \cdot \Im{\overline{v} \nabla v}\,dx
    + o(1)
  \end{equation}
  where $\ude = v \, \udd$, $\Ade = \Add + B$, $\hdd = \curl \Add$ and
  \begin{equation}
    F_\d[v, B] = \frac{1}{2}\int_{\Od}\Par{ |(\nabla - iB) v|^2
      + \frac{1}{2\eps^2}(1-|v|^2)^2}\,dx
    + \frac{1}{2}\int_\Omega(\curl B)^2\,dx.
  \end{equation}
  Here $(\udd, \Add)$ is the minimizer of the $S^1$-valued problem
  \eqref{eq:min_prob_s1_prescribed_D} with the prescribed degrees $D$.
\end{lemma}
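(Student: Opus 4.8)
The plan is to substitute the multiplicative ansatz $\ude = v\,\udd$, $\Ade = \Add + B$ into $GL_\d^\eps$, expand, and recognize everything but a small remainder. Since $|\udd|\equiv 1$, the map $v := \ude\,\overline{\udd}$ lies in $H^1(\Od;\C)$ with $|v| = |\ude|$ (using $|\ude|\le 1$ from the maximum principle to get the needed interior regularity), and $B := \Ade - \Add$ lies in $H(\O;\R^2)$ as a difference of Coulomb-gauge fields. I would first record the pointwise identities
\[
  (\nabla - i\Ade)\ude = \udd(\nabla - iB)v + v(\nabla - i\Add)\udd, \qquad (\nabla - i\Add)\udd = i\,(J - \Add)\,\udd,
\]
where $J := \Im{\overline{\udd}\nabla\udd}$ is the single-valued $\S^1$-current; both follow from $|\udd|\equiv 1$, which also gives $|(\nabla - i\Add)\udd|^2 = |J - \Add|^2$ on $\Od$. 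Squaring the first identity and using $\Re{\overline{v}\nabla v} = \tfrac12\nabla|v|^2$, the kinetic density becomes $|(\nabla - iB)v|^2 + |v|^2|J-\Add|^2 + 2(J-\Add)\cdot\Im{\overline{v}\nabla v} - 2|v|^2(J-\Add)\cdot B$; the potential term is literally that of $F_\d$ because $|\ude| = |v|$; and the magnetic term opens up as $\tfrac12\int_\O(\hdd-\he)^2 + \int_\O(\hdd-\he)\curl B + \tfrac12\int_\O(\curl B)^2$. Collecting terms produces the \emph{exact} identity
\[
  GL_\d^\eps[\ude,\Ade] = GL_\d[\udd,\Add] + F_\d[v,B] + \int_{\Od}(J-\Add)\cdot\Im{\overline{v}\nabla v}\,dx + R_\d,
\]
with $R_\d = \tfrac12\int_{\Od}(|v|^2-1)|J-\Add|^2\,dx - \int_{\Od}|v|^2(J-\Add)\cdot B\,dx + \int_\O(\hdd-\he)\,\curl B\,dx$.

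The next step is to absorb the magnetic cross term using the Euler--Lagrange equation for the vector potential of the $\S^1$-minimizer. The degree constraint in \eqref{eq:min_prob_s1_prescribed_D} does not affect the $A$-variation, so $\Add$ satisfies, for every admissible $\Phi\in H(\O;\R^2)$, the weak relation $\int_\O(\hdd-\he)\,\curl\Phi\,dx = \int_{\Od}(J-\Add)\cdot\Phi\,dx$; in strong form this is exactly the field problem \eqref{eq:magn_field} underlying Theorem \ref{thm:S1}, i.e. $J - \Add = -\nabla^{\bot}\hdd$ in $\Od$ (with the sign convention for $\nabla^{\bot}$ adopted in the statement), and in particular $|J - \Add| = |\nabla\hdd|$. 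Choosing $\Phi = B$ --- which is admissible --- gives $\int_\O(\hdd-\he)\curl B\,dx = \int_{\Od}(J-\Add)\cdot B\,dx$, so the last two terms of $R_\d$ collapse and I am left with
\[
  R_\d = \int_{\Od}\Par{1 - |v|^2}\Bra{(J-\Add)\cdot B - \tfrac12|J-\Add|^2}\,dx, \qquad \int_{\Od}(J-\Add)\cdot\Im{\overline{v}\nabla v}\,dx = -\int_{\Od}\nabla^{\bot}\hdd\cdot\Im{\overline{v}\nabla v}\,dx.
\]
This is exactly \eqref{eq:energy_decomp} up to the error $R_\d$, which --- crucially --- now carries the factor $1 - |v|^2 = 1 - |\ude|^2$ in every term.

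The hard part is showing $R_\d = o(1)$, and this is where the scale separation enters decisively. From the a priori upper bound \eqref{eq:ub}, $\tfrac{1}{4\eps^2}\int_{\Od}(1-|\ude|^2)^2 \le C|\log\d|^2$, hence $\|1-|\ude|^2\|_{L^2(\Od)} \le C\eps|\log\d|$. For $B$ one has $\|\curl B\|_{L^2(\O)} = \|\curl\Ade - \curl\Add\|_{L^2(\O)} = O(|\log\d|)$ by \eqref{eq:ub} and Theorem \ref{thm:S1}, and the Coulomb-gauge estimate $\|B\|_{H^1(\O)} \le C\|\curl B\|_{L^2(\O)}$ (valid on the smooth simply connected $\O$) together with Sobolev embedding give $\|B\|_{L^4(\O)} = O(|\log\d|)$. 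For $\nabla\hdd$ I would use the decomposition $\hdd = h_1 + h_2 + h_3$ from Theorem \ref{thm:S1}: the pieces $\|\nabla h_1\|_{L^4} = |\he|\,\|\nabla\xi_0\|_{L^4} = O(|\log\d|)$ and $\|\nabla h_3\|_{L^4} = O(|\log\d|^2)$ are harmless, while $\nabla h_2$ is of size $|\dj|/|x-\aj|$ near $\odj$, so that $\|\nabla h_2\|_{L^4(\Od)}^4 = O\!\Par{\int_\d^{R/4} r^{-3}\,dr} = O(\d^{-2})$ (for bounded $|D|$, as in the application); hence $\|\nabla\hdd\|_{L^4(\Od)} = O(\d^{-1/2})$. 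Hölder's inequality then yields $|R_\d| \le \|1-|\ude|^2\|_{L^2}\Par{\|\nabla\hdd\|_{L^4}\|B\|_{L^4} + \tfrac12\|\nabla\hdd\|_{L^4}^2} = O\!\Par{\eps\,\d^{-1}|\log\d|^2}$. Since \eqref{eq:cond_eps} forces $\eps < \d^{k}$ for every fixed $k$ once $\d$ is small, $\eps\,\d^{-1}|\log\d|^2 \le \d\,|\log\d|^2 \to 0$, so $R_\d = o(1)$ and \eqref{eq:energy_decomp} follows.

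I expect the main obstacle to be precisely this last estimate: owing to the hole vortices, $\nabla\hdd$ blows up near the holes --- of order $\d^{-1/2}$ in $L^4(\Od)$ --- and it is the wide gap $|\log\eps|\gg|\log\d|$, which makes $\eps$ smaller than every fixed power of $\d$, that defeats this blow-up against the $L^2$-smallness $O(\eps|\log\d|)$ of $1-|\ude|^2$. A secondary bookkeeping point is verifying that $B = \Ade - \Add$ is an admissible test field in the $\S^1$ Euler--Lagrange equation and that $\udd$ is smooth enough in $\Od$ to legitimize squaring the covariant-derivative identity and the integrations by parts above.
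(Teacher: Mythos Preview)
Your proof is correct and follows essentially the same route as the paper: expand the kinetic term via the ansatz, use the $\S^1$ Euler--Lagrange relation $J-\Add = -\nabla^\bot\hdd$ to cancel the magnetic cross term (the paper does this cancellation by explicit integration by parts using the boundary conditions in \eqref{eq:magn_field}, which is the same identity), and then estimate the $(1-|v|^2)$-weighted remainder by H\"older with $L^4$ bounds on $\nabla\hdd$ and $B$. Your $L^4$ bound $\|\nabla\hdd\|_{L^4(\Od)} = O(\d^{-1/2})$ is in fact sharper than the paper's $O(|\log\d|/\d)$, but both are absorbed by $\eps$ via \eqref{eq:cond_eps}.
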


\begin{proof}
  Using the representation \eqref{eq:S1_func_h} of Ginzburg-Landau
  functional in terms of $\hdd$, note that the pair $(\udd, \Add)$
  satisfies the following equation
  \begin{equation}
    \nabla^\bot\hdd = - \Im\,(\uddo\nabla \udd - i\Add)
  \end{equation}
  outside of the holes.  We start the proof with representing
  $GL_\d^\eps[\ude, \Ade]$ as a sum of three terms:
  \begin{equation}
    GL_\d[\ude, \Ade] = I_1 + I_2 + I_3,
  \end{equation}
  where
  \begin{align}
    I_1 = \frac{1}{2}\int_{\O}|\nabla\ude - i\Ade\ude|^2\,dx,\quad
    I_2 = \frac{1}{4\eps^2}\int_{\Od}(1-|\ude|^2)^2\,dx,\quad
    I_3 = \frac{1}{2}\int_{\O}(\curl\Ade - \he)^2\,dx.
  \end{align}
  Observe that $|\ude| = |v|$ as $\ude = v\,\udd$ and $|\udd| =
  1$. Hence we can rewrite $I_2$ as
  \begin{equation}
    I_2
    = \frac{1}{4\eps^2}\int_{\Od}(1 - |\udd|^2)^2\,dx
    = \frac{1}{4\eps^2}\int_{\Od}(1 - |v|^2)^2\,dx,
  \end{equation}
  giving us the second term in the definition of $F_\d[v, B]$. Now
  rewrite $I_3$:
  \begin{align}
    I_3
    \nonumber &= \frac{1}{2}\int_{\O}(\curl\Ade - \he)^2\,dx \\
    \label{eq:rewrt_I3} &= \frac{1}{2}\int_{\O}(\hdd - \he)^2\,dx
                          + \frac{1}{2}\int_{\O}(\curl B)^2\,dx
                          + \int_{\O}\curl{B}\cdot(\hdd - \he)\,dx
  \end{align}
  Here, the first term is a part of $GL_\d[\udd, \Add]$ and the second
  term is a part of $F_\d[v, B]$. The last term will eventually cancel
  with a component of $I_1$. To this end,
  \begin{align} |\nabla\ude - i \Ade\ude|^2
    \nonumber &= \Abs{ v \Par{\nabla\udd - i\Add\udd} + \udd \Par{\nabla v - i B v} }^2 \\
    \nonumber &= |v|^2 |\nabla\udd - i\Add\udd|^2 + |\udd|^2|\nabla v - i B v|^2 \\
    \nonumber &+ 2\Re\Par{ \uddo \Par{\nabla\udd - i\Add\udd} \cdot v \Par{\nabla\overline{v} + i B \overline{v}} } \\
    \nonumber &= |\nabla v - i B v|^2 + |v|^2|\nabla\hdd|^2 \\
              &+ 2|v|^2 \nabla^\bot\hdd \cdot B - 2\nabla^\bot\hdd
                \cdot \Im(\overline{v}\nabla v)\label{eq:80}
  \end{align}
  The first term in \eqref{eq:80} contributes to $F_\d[v, B]$. The last term is included
  in the right hand side of the decomposition. The sum of two other
  terms has the form $|v|^2 \cdot R(x)$, where
  \begin{equation*}
    R(x) = |\nabla\hdd|^2 + 2\nabla^\bot\hdd \cdot B
  \end{equation*}
  Now add and subtract $\frac12\int_{\Od} R(x)\,dx$ to the energy
  $GL_\d[\ude, \Ade]$. The first term
  $\frac12\int_{\Od} |\nabla\hdd|^2 \,dx$ is a part of
  $GL_\d[\udd, \Add]$. Using integration by parts we prove that the
  second term $\int_{\Od} \nabla^\bot\hdd \cdot B \,dx$ indeed cancels
  with the last term in the representation \eqref{eq:rewrt_I3} of
  $I_3$ as alluded to above:
  \begin{align}\label{int_oe}
    \int_{\Od} \nabla^\bot \hdd \cdot B\,dx
    \nonumber &= \int_{\Od} \nabla^\bot(\hdd - \he) \cdot B \,dx \\
    \nonumber &= \int_{\bd\Od} (h_\d - \he) B \cdot \tau \,dS
                - \int_{\Od} (\hdd - \he) \nabla^\bot \cdot B \,dx \\
    \nonumber &= -\sum_{j=1}^{N} \left.(\hdd - \he)\right|_{\bd B(\aj, \d)} \int_{\bd B(\aj, \d)} B \cdot \tau \,dS
                - \int_{\Od} (\hdd - \he) \,\curl B \,dx \\
    \nonumber &= -\sum_{j=1}^{N} \left.(\hdd - \he)\right|_{\bd B(\aj, \d)} \int_{B(\aj, \d)} \curl B \,dS
                - \int_{\Od} (\hdd - \he) \,\curl B \,dx \\
              &= - \int_{\O} (\hdd - \he) \,\curl B \,dx.
  \end{align}
  Here we used the facts that $\hdd = \he$ on the boundary $\bd\O$ and
  $\hdd = \const$ in $B(\aj, \d)$ that follow from the equation for
  $\hdd$.
		
  Adding up the results above gives:
  \begin{align}
    GL_\d^\eps[\ude, \Ade]
    \nonumber &= GL_\d[\udd, \Add]
                + F_\d[v, B] \\
              &- \int_{\Od} \nabla^\bot \hdd \cdot \Im{\overline{v} \nabla v}\,dx
                + \int_{\Od} (1 - |v|^2) R(x) \,dx + o(1)
  \end{align}
  The remaining task is to show that
  \begin{equation*}
    I = \int_{\Od} (1 - |v|^2) R(x) \,dx
  \end{equation*}
  goes to zero as $\d\to 0$. H\"older's inequality implies that
  \begin{equation}
    |I| \leq \|1 - |v|^2\|_{L^2(\Od)} \cdot \Par{ 2\|\nabla\hdd\|_{L^4(\Od)}^2 + \|B\|_{L^4(\Od)}^2 }.
  \end{equation}
  The first multiplier in this expression is less then $M\eps|\log\d|$
  when $\d \to 0$ because of the a priori estimate on the
  energy. Using the relation between $\eps$ and $\d$
  \begin{equation}
    |\log\eps| \gg |\log\d|,
  \end{equation}
  we show that $\eps$ is sufficiently small to compensate for the
  growth of the other terms.
		
  The function $\hdd$ is described in Theorem \ref{thm:S1} and because
  of Lemma \ref{lem:grad_est} it satisfies the estimate
  \begin{equation}
    \|\nabla\hdd\|_{L^4(\Od)}^2 \leq \frac{C|\log\d|^2}{\d^2}.
  \end{equation}
  In order to estimate $\|B\|_{L_4(\Od)}$, recall that
  $\div{\Ade} = 0$ due to the gauge invariance. Then by the
  Poincar\'e's lemma $\Ade$ has a potential, i.e. there exists $\Pde$
  such that $\nabla^\bot\Pde = \Ade$. Substituting this into
  $\hde = \curl\Ade$, we obtain the equality $\Lap\Pde = \hde$. The
  function $\Pde$ is a potential so we are able to make it zero on the
  boundary $\bd\O$. From the theory of elliptic operators and the a
  priori energy estimate, we obtain
  \begin{equation}
    \|\Pde\|^2_{H^2(\O)} \leq
    \|\hde\|^2_{L_2(\O)} \leq C|\log\d|^2.
  \end{equation}
  Since the embedding $H^1(\O) \subset L^4(\O)$ is continuous we have
  \begin{equation*}
    \|\Ade\|_{L_4(\Od)} \leq C\|\Pde\|_{H^2(\O)} \leq C|\log\d|.
  \end{equation*}
  The same estimate holds for $\Add$. Using the decomposition
  $\Ade = B + \Add$ we obtain this estimate for $B$:
  \begin{equation*}
    \|B\|_{L_4(\Od)} \leq C|\log\d|
  \end{equation*}
		
  Combining all estimates obtained in this section, we conclude that
  \begin{equation*}
    |I| \leq C\eps|\log\delta| \Par{\frac{|\log\d|^2}{\d^2} + |\log\d|^2}.
  \end{equation*}
  The condition $|\log\eps| \gg |\log\d|$ implies that $\eps$ is much
  smaller than any power of $\delta$, therefore $I$ goes to zero as
  $\d \to 0$ that completes the proof.
\end{proof}

\section{Absence of Bulk Vortices}\label{sec:absence_of_bulk_vortices}

In this section we further analyze the energy decomposition
\eqref{eq:energy_decomp}. The energy of the unconstrained solution is
minimal, hence
\begin{align}\label{eq:comp_min}
  GL_\d^\eps[\ude, \Ade] \leq GL_\d[\udd, \Add],
\end{align}
and using \eqref{eq:energy_decomp} we have
\begin{equation}\label{eq:decomp_ineq}
  F_\d[v, B]
  \leq \int_\Od \nabla^\bot \hdd \cdot \Im{\overline{v}\nabla v}\,dx
  + o(1).
\end{equation}
First, we derive an upper bound for the integral term in
\eqref{eq:decomp_ineq} and thus for the energy $F_\d$. We start with a
simple fact that will also be used later on.
\begin{proposition}
  Given a sufficiently smooth domain $S\subset\mathbb R^2$ and any
  $R \in L^2(S,\mathbb R)$, $P \in H^1(S,\mathbb R^2)$, and
  $v \in H^1(S,\mathbb C)$ such that $|v|\leq1$ a.e. $x\in S$, we have
  that
  \begin{align}
    \left| \int_{S} R(x) \cdot \Im{\overline{v} \nabla v}\,dx \right|
    \nonumber &\leq \left| \int_{S} R(x) \cdot \left( \Im{\overline{v} (\nabla - iP)v} + P|v|^2 \right)\,dx \right| \\
    \label{eq:aux_est_1} &\leq \|R\|_{L^2(S)} \cdot \left( \|(\nabla - iP)v\|_{L^2(S)} + \|P\|_{L^2(S)} \right)
  \end{align}
\end{proposition}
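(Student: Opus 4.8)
The plan is to notice that the first line is in fact an identity and that the second is Cauchy--Schwarz after a pointwise bound that exploits $|v|\le1$.

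First I would record the pointwise algebraic identity
\[
\Im\big(\overline{v}\,(\nabla - iP)v\big) = \Im(\overline{v}\,\nabla v) - P\,|v|^2 ,
\]
which holds a.e. because $P$ is real-valued, so $\overline{v}\,(-iPv) = -iP|v|^2$ and hence $\Im(-iP|v|^2) = -P|v|^2$. Rearranging gives $\Im(\overline{v}\,\nabla v) = \Im\big(\overline{v}\,(\nabla-iP)v\big) + P|v|^2$, so the two integrands in the first line of \eqref{eq:aux_est_1} coincide and the first inequality is actually an equality. One should observe at this point that every term is integrable: $\nabla v\in L^2(S)$ by hypothesis, $P\in L^2(S)$ via the continuous embedding $H^1(S)\hookrightarrow L^2(S)$, $|v|\le1$ a.e., and $R\in L^2(S)$, so all the products appearing below lie in $L^1(S)$.

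Next I would establish the pointwise bound on the integrand of the middle expression. Using $|\Im(\overline{v}\,w)|\le|v|\,|w|$ for any $w\in\mathbb C^2$ (which follows componentwise from $|\Im z|\le|z|$) together with $|v|\le1$ a.e., one gets $|\Im(\overline{v}\,(\nabla-iP)v)|\le|(\nabla-iP)v|$ and, similarly, $|P\,|v|^2|\le|P|$. Therefore
\[
\big|\,R\cdot\big(\Im(\overline{v}\,(\nabla-iP)v) + P|v|^2\big)\big| \le |R|\big(|(\nabla-iP)v| + |P|\big)\qquad\text{a.e. in }S .
\]
Then I would integrate over $S$, apply the triangle inequality to split the right-hand side into $\int_S|R|\,|(\nabla-iP)v|\,dx$ and $\int_S|R|\,|P|\,dx$, and bound each by the Cauchy--Schwarz inequality to obtain $\|R\|_{L^2(S)}\|(\nabla-iP)v\|_{L^2(S)} + \|R\|_{L^2(S)}\|P\|_{L^2(S)}$, which is exactly the right-hand side of \eqref{eq:aux_est_1}.

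There is no genuine obstacle here; the statement is essentially a gauge-covariant reformulation of the current $\Im(\overline v\,\nabla v)$ combined with Cauchy--Schwarz. The only point requiring a little care is the integrability and measurability bookkeeping in the $H^1$ setting, and that is immediate from $|v|\le1$ and the stated regularity of $R$, $P$, and $v$.
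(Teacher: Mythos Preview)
Your proof is correct. The paper itself does not give a proof of this proposition at all---it is introduced as ``a simple fact'' and stated without argument---so there is nothing to compare against; your identity $\Im(\overline v\,\nabla v)=\Im(\overline v\,(\nabla-iP)v)+P|v|^2$ followed by the pointwise bound (using $|v|\le1$) and Cauchy--Schwarz is exactly the intended justification.
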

	
We are now in the position to state and prove
\begin{lemma}\label{lem:F_upper_bound}
  The following estimates hold:
  \begin{align}
    F_\d[v, B] &\leq |\log\d|^2, \\
    \label{eq:h_int_upp_bound} \Abs{\int_\Od \nabla^\bot \hdd \cdot \Im{\overline{v}\nabla v}\,dx} &\leq |\log\d|^2.
  \end{align}
\end{lemma}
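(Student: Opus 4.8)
The plan is to bound, by $O\Par{|\log\d|^2}$, every quantity that appears in the decomposition inequality \eqref{eq:decomp_ineq}, and then read off both estimates. Since $\Par{\ude,\Ade}$ minimizes $GL_\d^\eps$, the maximum principle gives $|\ude|\le 1$ in $\Od$, hence $|v|=|\ude|\le 1$ and the Proposition above applies to $v$. The a priori bound \eqref{eq:ub}, $GL_\d^\eps[\ude,\Ade]\le C|\log\d|^2$, then controls each of $\|(\nabla-i\Ade)\ude\|_{L^2(\Od)}$, $\eps^{-1}\|1-|\ude|^2\|_{L^2(\Od)}$ and $\|\hde-\he\|_{L^2(\O)}$ --- hence also $\|\hde\|_{L^2(\O)}$ --- by $C|\log\d|$. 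The one nontrivial input is an a priori bound on the hole degrees $D=\Par{D^1,\dots,D^N}$ of $\ude$ (taken, as in Theorem \ref{thm:main}, on a circle of radius $\Rd\in[\d,\d+\d^2]$ on which $|\ude|$ does not vanish), namely $|D|\le C|\log\d|^{1/2}$: restricting $GL_\d^\eps[\ude,\Ade]$ to a fixed annulus around each hole, using Theorem \ref{thm:bcm} --- which, since $\sum_i|d_i|\to 0$, also clears that annulus of vortices of nonzero degree --- together with the standard lower bound for the Dirichlet energy of a map of degree $D^j$ on such an annulus (after splitting off the magnetic contribution by Young's inequality and absorbing $\|\Ade\|_{L^2(\O)}^2=O(|\log\d|^2)$) gives $GL_\d^\eps[\ude,\Ade]\ge\frac{\pi}{2}\Par{D^j}^2|\log\d|\Par{1+o(1)}-C|\log\d|^2$, which with \eqref{eq:ub} forces the claimed bound. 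Feeding $|D|\le C|\log\d|^{1/2}$ into the expansion \eqref{eq:l_expansion} of Theorem \ref{thm:S1} yields $GL_\d[\udd,\Add]=l_\d(D)=O(|\log\d|^2)$, so by \eqref{eq:S1_func_h} we get directly $\|\nabla\hdd\|_{L^2(\Od)}\le\sqrt{2\,l_\d(D)}=O(|\log\d|)$ and $\|\hdd\|_{L^2(\O)}\le\|\he\|_{L^2(\O)}+\sqrt{2\,l_\d(D)}=O(|\log\d|)$.

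Next I would assemble the remaining ingredients. For $B=\Ade-\Add$ the argument from the proof of Lemma \ref{lem:energy_decomp} applies: $\div\Ade=0$ gives a potential $\Pde$ with $\Lap\Pde=\hde$ and $\Pde=0$ on $\bd\O$, and elliptic regularity with \eqref{eq:ub} gives $\|\Ade\|_{L^4(\Od)}\le C\|\Pde\|_{H^2(\O)}\le C|\log\d|$, the same for $\Add$, hence $\|B\|_{L^2(\Od)}\le C\|B\|_{L^4(\Od)}\le C|\log\d|$. For the kinetic term, the pointwise identity $\udd\Par{\nabla-iB}v=\Par{\nabla-i\Ade}\ude-v\Par{\nabla-i\Add}\udd$ together with $|\udd|=1$, $|v|\le 1$ and the relation $|(\nabla-i\Add)\udd|=|\nabla^\bot\hdd|=|\nabla\hdd|$ --- which follows from the Euler--Lagrange identity $\nabla^\bot\hdd=-\Im(\uddo\nabla\udd-i\Add)$ recalled in the proof of Lemma \ref{lem:energy_decomp} --- gives $\|(\nabla-iB)v\|_{L^2(\Od)}\le\|(\nabla-i\Ade)\ude\|_{L^2(\Od)}+\|\nabla\hdd\|_{L^2(\Od)}=O(|\log\d|)$. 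Applying the Proposition with $S=\Od$, $R=\nabla^\bot\hdd$ and $P=B$ then gives
\[
  \Abs{\int_\Od\nabla^\bot\hdd\cdot\Im{\overline{v}\nabla v}\,dx}
  \le\|\nabla\hdd\|_{L^2(\Od)}\Par{\|(\nabla-iB)v\|_{L^2(\Od)}+\|B\|_{L^2(\Od)}}=O\Par{|\log\d|^2},
\]
which is \eqref{eq:h_int_upp_bound}. The bound on $F_\d[v,B]$ follows at once from \eqref{eq:decomp_ineq}: $F_\d[v,B]\le\int_\Od\nabla^\bot\hdd\cdot\Im{\overline{v}\nabla v}\,dx+o(1)=O\Par{|\log\d|^2}$; alternatively one bounds $F_\d[v,B]$ term by term, its potential part being $\frac{1}{4\eps^2}\|1-|\ude|^2\|_{L^2(\Od)}^2\le GL_\d^\eps[\ude,\Ade]$, its magnetic part $\frac12\|\curl B\|_{L^2(\O)}^2=\frac12\|\hde-\hdd\|_{L^2(\O)}^2$, and its kinetic part $\frac12\|(\nabla-iB)v\|_{L^2(\Od)}^2$, all $O\Par{|\log\d|^2}$.

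The main obstacle is the a priori degree bound $|D|\le C|\log\d|^{1/2}$. It is what makes $\|\nabla\hdd\|_{L^2(\Od)}$ --- the quantity multiplying everything else in both estimates --- of size $O(|\log\d|)$ via Theorem \ref{thm:S1}, and it cannot be recovered from \eqref{eq:decomp_ineq} by itself: in the energy decomposition $l_\d(D)=GL_\d[\udd,\Add]$ sits on the favorable side of the inequality, so the crude Cauchy--Schwarz estimates only return tautologies unless one inserts an independent lower bound for $GL_\d^\eps$ that grows with $|D|$. Once that bound is secured, the rest is routine bookkeeping with \eqref{eq:ub}, Theorem \ref{thm:S1} and the Proposition.
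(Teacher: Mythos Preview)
Your approach differs from the paper's in two ways, and one of them creates a genuine gap. First, you take $D$ to be the hole degrees of the $\C$-valued minimizer $\ude$, whereas in the decomposition of Lemma~\ref{lem:energy_decomp} the vector $D$ is a free parameter; the paper (implicitly, throughout Sections~5--6) chooses $D=D_\d$, the optimal $\S^1$-degrees $D_\d^j=\llbracket\sigma(1-\xi_0(\aj))\rrbracket$ of Corollary~\ref{_cor:min_dj}. These are $O(1)$, so $\|\nabla\hdd\|_{L^2(\Od)}^2\le 2\,l_\d(D_\d)=O(|\log\d|^2)$ follows at once from Theorem~\ref{thm:S1}, with no a~priori degree bound needed. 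Second, even after securing $\|\nabla\hdd\|_{L^2}$, the paper does \emph{not} bound $\|(\nabla-iB)v\|_{L^2}$ and $\|B\|_{L^2}$ separately: it applies Young's inequality with a parameter to \eqref{eq:aux_est_1} (together with the Poincar\'e-type estimate $\|B\|_{L^2}\le C_\O\|\curl B\|_{L^2}$, valid since $B\in H(\O;\R^2)$) so that the right-hand side becomes $O(|\log\d|^2)+\tfrac12 F_\d[v,B]$; then \eqref{eq:decomp_ineq} gives $F_\d[v,B]\le O(|\log\d|^2)$ by absorption, and \eqref{eq:h_int_upp_bound} follows by substituting back.

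The gap is your claim that Theorem~\ref{thm:bcm} forces $\sum_i|d_i|\to 0$. Part~4 of that theorem combined with \eqref{eq:ub} gives only $d\le C|\log\d|^2/|\log\eps|$, and under the standing hypothesis $|\log\eps|\gg|\log\d|$ this need not tend to zero (take $|\log\eps|=|\log\d|^{3/2}$, so that $d\le C|\log\d|^{1/2}\to\infty$). Without $d\to 0$ you cannot clear the fixed annulus of bulk vortices; the degree on intermediate circles may differ from $D^j$ by as much as $d$, and the annulus lower bound then yields at best $|D^j|\le d+C|\log\d|^{1/2}$, which is not $O(|\log\d|^{1/2})$ in general. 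In fact, the absence of bulk vortices is exactly Lemma~\ref{lem:absence_of_bulk_vortices}, proved \emph{after} and \emph{using} Lemma~\ref{lem:F_upper_bound}, so your route is effectively circular. The fix is simply to take $D=D_\d$ from the outset; then the whole ``nontrivial input'' disappears and the absorption argument closes in two lines.
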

\begin{proof}
  Use \eqref{eq:aux_est_1} and Poincar\'e inequality to estimate the
  integral term in \eqref{eq:decomp_ineq}:
  \begin{align}\label{eq:h_int_upp_bound_proof}
    \left| \int_{\Od} \nabla^\bot \hdd \cdot \Im{\overline{v} \nabla v}\,dx \right|
    \nonumber &\leq \|\nabla \hdd\|_{L^2(\Od)} \cdot \left( \|(\nabla - iB)v \|_{L^2(\Od)}
                + C_{\O}\|\curl{B}\|_{L^2(\O)} \right) \\
    \nonumber &\leq \frac{1}{2\alpha}\|\nabla \hdd\|_{L^2(\Od)}^2
                + \frac{\alpha}{2}\left( \|(\nabla - iB)v \|_{L^2(\Od)}^2
                + C_{\O}^2\|\curl{B}\|_{L^2(\Omega)}^2 \right) \\
              &\leq O(|\log{\d}|^2) + \frac12 F_\d[v, B]
  \end{align}
  where $\alpha = \min(1, C_{\O}^{-2})$. Here we have used the
  standard fact that $|\ude|\leq1$ and, therefore, $|v|\leq1$
  a.e. $x\in\Od$.
		
  Combining the inequality \eqref{eq:h_int_upp_bound_proof} with
  \eqref{eq:decomp_ineq} gives
  \begin{equation}\label{eq:f_upp_bound}
    F_\d[v, B]
    \leq O(|\log\d|^2).
  \end{equation}
  The estimates \eqref{eq:h_int_upp_bound_proof} and
  \eqref{eq:f_upp_bound} imply \eqref{eq:h_int_upp_bound}.
\end{proof}

The bound \eqref{eq:f_upp_bound} allows us to apply the ball
construction method to $F_\d$. Theorem \ref{thm:bcm} gives the
following lower bound on the energy inside ``bad'' disks:
\begin{equation}\label{eq:f_o}
  F_\d[v, B; B_i] \geq \pi |d_i| \Par{ \log{\frac{\d^2}{|d_i|\eps}} - C } \text{ for every } i \in \index.
\end{equation}
Here $F_\d[v, B; B_i]$ is the energy $F_\d[v, B]$ where first two
integrals are taken over the domain $B_i = B(\bi, r_i)$. To continue
working with \eqref{eq:decomp_ineq} we prove the following lemma.
\begin{lemma}\label{lem:int_repres}
  The following representation holds:
  \begin{equation}\label{eq:int_representation}
    \int_{\Od}\nabla^\bot\hdd \cdot \Im{\overline{v} \nabla v}\,dx
    = 2\pi \sum_{i \in \index_1} (\he - \hdd(\bi))\bdi + 2\pi \sum_{j=1}^N \dvj(\he - \Hrj) + O(1)
  \end{equation}
  where $\dvj = \deg(v, \grj) = \dedj - \dj$, the circular curves
  $\grj = \bd B(\aj, R)$ enclose $\odj$ with $R = \d + O(\d^2)$, the
  quantities $\Hrj = \dj K_0(R) + \he\xi_0(\aj)$, and $\index_1$
  includes only the balls that are proper subsets of
  $\Od \setminus \cup_{j=1}^N \odjo$ and do not intersect the boundary
  $\bd\Od$.
\end{lemma}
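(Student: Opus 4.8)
The plan is to integrate by parts, replacing the integrand $\nabla^\bot\hdd\cdot\Im(\overline v\nabla v)$ by a term involving $(\he-\hdd)$ times the circulation of the current, using the Euler--Lagrange equation $\nabla^\bot\hdd = -\Im(\uddo\nabla\udd - i\Add)$ that holds in $\Od$. First I would observe that $\hdd$ satisfies $-\Lap\hdd + \hdd = 0$ in $\Od$, and decompose $\Od$ into the ``bad'' balls $B_i$ supplied by Theorem \ref{thm:bcm}, small collars $\Brj\setminus\odjo$ around each hole with $R = \d + O(\d^2)$ (here the radius $R$ will be selected, as in Theorem \ref{thm:main}, so that $v$ does not vanish on $\grj$), and the remaining ``good'' region where $|v|$ is close to $1$. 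On the good region one writes $\Im(\overline v\nabla v) = \Im(\overline v(\nabla - iB)v) + B|v|^2$ and uses $|v|^2 = 1 + O(1-|v|^2)$ together with the a priori smallness $\|1-|v|^2\|_{L^2}\le M\eps|\log\d|$ to reduce to an exact-form computation, throwing the error into $O(1)$.

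The core step is Stokes' theorem. Writing $\nabla^\bot\hdd\cdot\Im(\overline v\nabla v)$ as (up to controlled errors) $\nabla^\bot(\hdd - \he)\cdot j$, where $j$ is the superconducting current, and using $\curl j = $ (essentially) the vorticity, integration by parts over each connected piece of the good region produces boundary terms $\int_{\partial}(\hdd - \he)\, j\cdot\tau\,dS$. On $\bd\O$ the factor $\hdd - \he$ vanishes; on each $\bd B_i$ the line integral of $j\cdot\tau$ is $2\pi\bdi + o(1)$ by the degree of $v$ there (only the balls in $\index_1$, i.e. those strictly inside $\Od$ and away from $\bd\Od$, carry a degree, which is why the sum is restricted to $\index_1$); and on each collar circle $\grj$ the integral of $j\cdot\tau$ evaluates, after subtracting the contribution of $\udd$ already accounted for in $\hdd$, to $2\pi\dvj$ with $\dvj = \deg(v,\grj) = \dedj - \dj$. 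The value of $\hdd$ on $\grj$ is, to leading order, $\Hrj = \dj K_0(R) + \he\xi_0(\aj)$, read off from the decomposition $\hdd = h_1 + h_2 + h_3$ in Theorem \ref{thm:S1}: $h_1(\aj) = \he\xi_0(\aj)$, $h_2 = \dj K_0(\d) \approx \dj K_0(R)$ on the collar, and $h_3 = O(\d|\log\d|^2 + |D|)$ contributes only to the error. Summing the three families of boundary terms gives exactly the claimed right-hand side.

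The main obstacle I anticipate is bookkeeping the errors uniformly, in two places. First, on the collars one must check that replacing $\hdd$ by the constant $\Hrj$ and $R$ by $\d$ in $K_0$ costs only $O(1)$ after multiplication by $2\pi\dvj$; since $K_0(R) - K_0(\d) = O(\d)$ and $\hdd$ varies by $O(\d|\log\d|)$ across the collar while $\dvj$ is a bounded integer (bounded by the a priori energy via item (4) of Theorem \ref{thm:bcm}), this is fine, but it needs the explicit estimates from Section \ref{sec:s1_valued_problem}. Second, and more delicate, is controlling the current $j$ itself near the bad balls: the circulation $\int_{\bd B_i} j\cdot\tau$ differs from $2\pi\bdi$ by a term governed by $\int_{B_i}(1-|v|^2)$ and by $\|(\nabla - iB)v\|_{L^2(B_i)}$, and one must sum these over all $i\in\index$ and absorb them into $O(1)$ using $r(\mathfrak B) = \d^2/2$ and the energy bound $F_\d\le|\log\d|^2$ from Lemma \ref{lem:F_upper_bound}; the ball-construction lower bound \eqref{eq:f_o} keeps the total degree $\bdsum$ from blowing up, which is what makes the error genuinely $O(1)$ rather than merely $o(|\log\d|)$.
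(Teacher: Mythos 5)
Your overall architecture coincides with the paper's: split $\Od$ into the bad balls, thin collars around the holes, and a good region $G$; discard the small sets by direct measure/H\"older estimates; integrate by parts on $G$ so that the boundary circulations produce $2\pi$ times a degree; use $\hdd=\he$ on $\bd\O$ to kill the outer boundary term; and read off $\hdd\approx\Hrj$ on $\grj$ and $\hdd\approx\hdd(\bi)$ on $\bd B_i$ from the decomposition $\hdd=h_1+h_2+h_3$ of Theorem~\ref{thm:S1}. The restriction of the ball sum to $\index_1$ is also handled as you describe.

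The step that would fail as written is your good-region reduction via $\Im{\overline v\nabla v}=\Im{\overline v(\nabla-iB)v}+B|v|^2$: neither summand is closed, and after integrating $\nabla^\bot(\hdd-\he)$ against $B|v|^2$ by parts you are left with a bulk term $\int_G(\hdd-\he)\curl B\,dx$, which by Cauchy--Schwarz and the a priori bounds $\|\hdd-\he\|_{L^2}=O(|\log\d|)$, $\|\curl B\|_{L^2}=O(|\log\d|)$ is only $O(|\log\d|^2)$ --- far too large to absorb into the claimed $O(1)$ (and this cross term was already consumed in the cancellation inside Lemma~\ref{lem:energy_decomp}, so it must not reappear). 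The paper's device is different: on $G$ one has $|v|\ge1-\theta$, so one sets $w=v/|v|$ and writes $\Im{\overline v\nabla v}=\Im{\overline w\nabla w}+(|v|^2-1)\Im{\overline w\nabla w}$; the first term is exactly the phase gradient, hence curl-free, so integration by parts leaves \emph{only} boundary terms, while the second is bounded by $\|\nabla\hdd\|_{L^\infty(G)}\|1-|v|^2\|_{L^2}\|\nabla w\|_{L^2}=O(\eps|\log\d|^2/\d)=o(1)$ thanks to \eqref{eq:cond_eps}. Relatedly, your claim $\int_{\bd B_i}j\cdot\tau=2\pi\bdi+o(1)$ is not immediate for $j=\Im{\overline v\nabla v}$ since $|v|\neq1$ on $\bd B_i$; the paper converts these boundary integrals to the level sets $U_i=V_i\cap\{|v|\le1/2\}$, where $\Im{\overline w\nabla w}\cdot\tau=4\,\Im{\overline v\nabla v}\cdot\tau$, and then back to area integrals over $V_i\setminus U_i$ and $U_i$ that are $o(1)$. (A small further point: no ``subtraction of the contribution of $\udd$'' is needed on $\grj$, since $v=\ude/\udd$ already carries the degree $\dvj=\dedj-\dj$.) With the $w$-substitution and the level-set step supplied in place of your current decomposition, your outline matches the paper's proof.
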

\begin{proof}
  We divide the domain $\Od$ into three disjoint parts:
  \begin{equation}
    \Od = S \cup V \cup G,
  \end{equation}
  where $S = \cup_{j=1}^N S_j$ consists of the annuli between
  $\bd\odj$ and $\grj$, the set
  $V = \Bra{(\cup_{i \in \index} B_i) \setminus S} \bigcap \Od$
  consists of the ``bad" disks, and $G$ corresponds to the remainder
  of the set $\Od$.

  Consider the subdomains $S$, $V$, and $G$ separately. The balls
  $B_i$---as well as stripes $S_j$---are very small so that
  \begin{align}
    \int_{V \cup S}\nabla^\bot\hdd \cdot \Im{\overline{v} \nabla v}\,dx
    \nonumber &\leq \meas(V \cup S)^{1/4} \cdot \|\nabla^{\bot}\hdd\|_{L^4(V \cup S)} \\
    \nonumber &\cdot \Par{ \|(\nabla - iB)v \|_{L^2(V \cup S)} + \|B\|_{L^2(V \cup S)} } \\
              &\leq C\d^{3/4} \cdot |\log\d| \cdot |\log\d| = o(1).
  \end{align}
  Introduce the function $w = v / |v|$. Then
  \begin{align}
    \int_{G}\nabla^{\bot}\hdd \cdot \Im{\overline{v} \nabla v}\,dx
    \nonumber &= \int_{G}\nabla^{\bot}\hdd \cdot \Im{\overline{w} \nabla w}\,dx
                + \int_{G}\nabla^{\bot}\hdd \cdot \left( \Im{\overline{v} \nabla v} - \Im{\overline{w} \nabla w} \right)\,dx \\
              &= I_1 + I_2.
  \end{align}

  To estimate the second integral, use the following:
  \begin{align}
    \Im{\overline{v} \nabla v} - \Im{\overline{w} \nabla w}
    \nonumber &= \Im\left( \overline{w}|v|(w\nabla|v| + |v|\nabla w) - \overline{w} \nabla w \right) \\
              &= \Im\left( |v|\nabla|v| + (|v|^2 - 1)\overline{w}\nabla w
                \right) = (|v|^2 - 1)\Im{\overline{w}\nabla w}
  \end{align}
  and
  \begin{equation}
    |\nabla v|^2 = |v|^2|\nabla w|^2 + |\nabla |v|| \geq (1-\theta)^2|\nabla w|^2 \geq \frac14|\nabla w|^2.
  \end{equation}
  since by Theorem \ref{thm:bcm} we have $|v| \geq 1 - \theta$ outside
  $B_i$. The function $v$ admits the same estimate as $\ude$. Add and
  subtract $iBv$ to get
  \begin{align}
    \frac12\|\nabla v\|_{L^2(G)}^2
    \nonumber &= \frac12\int_{G} |\nabla v|^2\,dx
                \leq \int_{G} \left( |(\nabla - iB) v|^2 + |v|^2|B|^2\right)\,dx \\
              &\leq \int_{\Od} |(\nabla - iB) v|^2\,dx
                + C_{\O}\int_{\O}|\curl{B}|^2\,dx \leq C|\log{\d}|^2.
  \end{align}
  This leads to the following estimate:
  \begin{align}
    |I_2|
    \nonumber &\leq \int_{G}\nabla^{\bot}\hdd \cdot (|v|^2 - 1)\Im{\overline{w}\nabla w}\,dx \\
    \nonumber &\leq \|\nabla^{\bot}\hdd\|_{L^{\infty}(G)} \cdot \int_{G}(|v|^2 - 1) \cdot |\nabla w|\,dx \\
    \nonumber &\leq C\d^{-1} \cdot \int_{G}(|v|^2 - 1) \cdot 2|\nabla v|\,dx \\
    \nonumber &\leq C\d^{-1} \cdot \||v|^2 - 1\|_{L^2(G)} \cdot \|\nabla v\|_{L^2(G)} \\
              &\leq C\d^{-1} \cdot \eps |\log{\d}| \cdot |\log{\d}|
                = o(1)
  \end{align}
  due to \eqref{eq:cond_eps}.
			
  Now rewrite the integral $I_1$. Integrating by parts, we obtain:
  \begin{align}
    I_1
    \nonumber &= \int_{G}\nabla^{\bot}(\hdd - \he)\cdot\Im{\overline{w}\nabla w}\,dx
                = -\int_{G}(\hdd - \he)\nabla^{\bot} \cdot \Im{\overline{w}\nabla w}\,dx \\
    \nonumber &+\int_{\bd\O}(\hdd - \he) \Im{\overline{w}\nabla w} \cdot \tau \,ds
                -\int_{\bd V}(\hdd - \he) \Im{\overline{w}\nabla w} \cdot \tau \,ds \\ \nonumber &
                                                                                                   -\int_{\cup_j\gamma_j}(\hdd - \he) \Im{\overline{w}\nabla w} \cdot \tau \,ds \\
              &= -\sum_{i \in \index} I_{1i}
                - \sum_{j=1}^{N}\int_{\grj}(\hdd - \he)\, \Im{\overline{w}\nabla w} \cdot \tau \,ds
  \end{align}
  where
  $I_{1i} = \int_{\bd V_i}(\hdd - \he) \,\Im{\overline{w}\nabla w}
  \cdot \tau \,ds$ and $V_i = B_i \cap \Od$. The term
  $\nabla^{\bot}\cdot\Im{\overline{w}\nabla w} = \curl\nabla\Phi = 0$,
  where $\Phi$ is a phase of $w$, disappears.
		
  Since the curves $\grj$ are small, we can approximate $\hdd$ by a
  constant $\Hrj$ to conclude that
  \begin{align}
    \int_{\grj}(\hdd - \he)\, \Im{\overline{w}\nabla w} \cdot \tau \,ds
    \nonumber &= 2\pi\dvj(\Hrj - \he)
                + \int_{\grj}(\hdd - \Hrj)\, \Im{\overline{w}\nabla w} \cdot \tau \,ds.
  \end{align}
  Set $\Hrj = \he\xi_0(\aj) + \dj K_0(R)$. Using the decomposition
  \eqref{eq:magn_field_decomposition} of $\hdd$, we get
  \begin{equation}
    |\hdd(x) - \Hrj|
    \leq \he|\xi_0(x) - \xi_0(\aj)| + |h_3(x)|
    \leq C_1\d|\log\d|^2 + C_2|D|
  \end{equation}
  for $x \in \grj$. This yields
  \begin{equation}
    \Abs{\int_{\grj}(\hdd - \Hrj) \,\Im{\overline{w}\nabla w} \cdot \tau \,ds}
    \leq \Par{C_1\d|\log\d|^2 + C_2|D|} \cdot \dvj = O(1).
  \end{equation}
  As a result we estimate that
  \begin{equation}
    I_1
    = -\sum_{i \in \index} I_{1i}
    - \sum_{j=1}^{N}2\pi\dvj(\Hrj - \he)
    + O(1).
  \end{equation}

  We now consider two cases. First, suppose that the set
  $\index_1 \subset \index$ is such that $B_i \subset \Od \setminus S$
  for $i \in \index_1$. We estimate the integrals $I_{1i}$ in a
  similar way as we did for the hole vortices. Approximate $\hdd(x)$
  by a constant value in the center of $B_i$:
  \begin{equation}
    I_{1i}
    = \int_{\bd V_i}(\hdd - \hdd(\bi)) \,\Im{\overline{w}\nabla w} \cdot \tau \,ds
    + \int_{\bd V_i}(\hdd(\bi) - \he) \,\Im{\overline{w}\nabla w} \cdot \tau \,ds
    = J_{1i} + J_{2i}
  \end{equation}
  Second integral directly gives the degree $d_i$ of the possible bulk
  vortex:
  \begin{equation}
    J_{2i} = 2\pi d_i (\hdd(\bi) - \he).
  \end{equation}
			
  To estimate $J_{1i}$ we introduce the subdomains
  $U_i = V_i \cap \{x\,\mid\,|v(x)| \leq 1/2\}$ so that their
  boundaries are the level sets of $v$. We add and subtract the
  integral over $\bd U_i$:
  \begin{equation}
    \sum_{i \in \index} J_{1i} = J_1 + J_2,
  \end{equation}
  where
  \begin{align}
    J_1 &= \int_{\cup_{i \in \index_1} \bd U_i}(\hdd - \hdd(\bi))
          \Im{\overline{w}\nabla w} \cdot \tau ds, \\
    \nonumber J_2 &= \int_{\cup_{i \in \index_1} \bd V_i}(\hdd - \hdd(\bi))
                    \Im{\overline{w}\nabla w} \cdot \tau ds
                    - \int_{\cup_{i \in \index_1}\bd U_i}(\hdd - \hdd(\bi))
                    \Im{\overline{w}\nabla w} \cdot \tau ds \\
        &= \int_{\cup_{i \in \index_1}(V_i \setminus U_i)} \nabla^{\bot}\cdot\Bra{(\hdd - \hdd(\bi))\Im{\overline{w}\nabla w}}dx
          = \int_{\cup_{i \in \index_1}(V_i \setminus U_i)} \nabla^{\bot}\hdd \cdot \Im{\overline{w}\nabla w}dx,
  \end{align}
  since $\nabla^{\bot}\cdot\Im{\overline{w}\nabla w} = 0$. The term
  $J_2$ is small:
  \begin{equation}\label{eq:est_I_i}
    |J_2|
    \leq \meas(\mathfrak{B})^{1/2} \cdot \|\nabla^{\bot}\hdd\|_{L^\infty(\mathfrak{B})} \cdot 2\|\nabla v\|_{L^2(\mathfrak{B})}
    \leq O(\d^2) \cdot O\Par{\frac1\delta} \cdot O(|\log{\d}|)
    = o(1).
  \end{equation}
  To estimate $J_1$, note, that $|v| = 1/2$ on $\bd U_i$ so that
  $\nabla w \cdot \tau = 2\nabla v \cdot \tau$ on $\bd U_i$ and:
  \begin{align}
    J_1 
    \nonumber &= \int_{\cup_{i \in \index_1} \bd U_i}(\hdd - \hdd(\bi)) \Im{\overline{w}\nabla w} \cdot \tau \,ds
                = 4\int_{\cup_{i \in \index_1} \bd U_i}(\hdd - \hdd(\bi)) \Im{\overline{v}\nabla v} \cdot \tau \,ds \\
    \nonumber &= 4\int_{\cup_{i \in \index_1} U_i}\nabla^{\bot}\hdd\cdot\Im{\overline{v}\nabla v}\,dx +
                4\int_{\cup_{i \in \index_1} U_i}(\hdd - \hdd(\bi))\Im(\nabla^{\bot}\overline{v}\cdot\nabla v)\,dx =
                L_1 + L_2.
  \end{align}
  The first integral $L_1$ admits the same estimate as in
  \eqref{eq:est_I_i}. To estimate $L_2$ note that
  \begin{equation}
    |\Im(\nabla^{\bot}\overline{v}\cdot\nabla v)|
    \leq |\nabla^{\bot}\overline{v}|\cdot|\nabla v|
    = |\nabla v|^2.
  \end{equation}
  Then
  \begin{align}
    |L_2|
    \nonumber &\leq 4\sum_{i \in \index_1}\|\hdd - \hdd(\bi)\|_{L^{\infty}(U_i)} \cdot \|\nabla v\|_{L^2(\O)}^2 \\
    \label{eq:L_2_est} &\leq 4\sum_{i \in \index_1}\|\nabla\hdd\|_{L^{\infty}(U_i)} \cdot r_i \cdot |\log{\d}|^2
                         \leq O\Par{\frac1\delta} \cdot \d^2 \cdot |\log{\d}|^2 = o(1).
  \end{align}
  Thus all integrals $L_1$, $L_2$, and therefore $J_1$, $J_2$, and
  $J_{1i}$ are small. The only ingredient left to consider is the set
  $\index_2$ consisting of the balls that intersect the boundary
  $\bd\O$. Here the estimates are very similar to those on the balls
  from $\index_1$ if we recall the boundary condition $\hdd = \he$ on
  $\bd\O$:
  \begin{align}
    \sum\limits_{i \in \index_2} I_{1i}
    \nonumber &= \int_{\cup_{i \in \index_2}\bd V_i}(\hdd - \he) \Im{\overline{w}\nabla w} \cdot \tau \,ds \\
    \nonumber &= 4\int_{\cup_{i \in \index_2}\bd U_i}(\hdd - \he) \Im{\overline{v}\nabla v} \cdot \tau \,ds
                + \int_{\cup_{i \in \index_2}(V_i \setminus U_i)}\nabla^{\bot}\hdd \cdot \Im{\overline{w}\nabla w}\,dx \\
    \nonumber &= 4\int_{\cup_{i \in \index_2}U_i}\nabla^{\bot}(\hdd - \he) \cdot \Im{\overline{v}\nabla v} \,dx
                + 4\int_{\cup_{i \in \index_2} U_i}(\hdd - \he) \Im(\nabla^{\bot}\overline{v}\cdot\nabla v)\,dx + o(1) \\
              &= o(1).
  \end{align}
  The external magnetic field here plays the same role as $\hdd(\bi)$
  in \eqref{eq:L_2_est}, that is:
  \begin{equation}
    |\hdd(x) - \he|
    \leq \|\nabla \hdd\|_{L^{\infty}(\O)} \cdot 2r_i
    \leq O(\d)
  \end{equation}
  in $B_i$ for $B_i \cap \bd\O \neq \emptyset$ because $\hdd = \he$ on
  $\bd\O$.
            
  Combining the estimates we obtain
  \begin{equation}
    \sum_{i \in \index} I_{1i} = \sum_{i \in \index_1} 2\pi d_i (\hdd(\bi) - \he) + o(1),
  \end{equation}
  thus concluding the proof.
\end{proof}
	
Putting together \eqref{eq:decomp_ineq}, \eqref{eq:f_o}, and
\eqref{eq:int_representation} we get
\begin{equation}\label{eq:decomp_ineq_2}
  F_\d[v, B; G]
  + \pi d \left( \log{\frac{\d^2}{d\eps}} - C \right)
  \leq 2\pi \sum_{i \in \index_1} (\he - \hdd(\bi))\bdi
  + 2\pi \sum_{j=1}^N \dvj(\he - \Hrj)
  + O(1),
\end{equation}
where $d = \sum_{i \in \index} |d_i|$ as before. This inequality holds
under the assumption that $d$ is nonzero. If, on the other hand, $d$
equals zero, the term
$\pi d \left( \log{\frac{\d^2}{d\eps}} - C \right)$ should be dropped.
	
In the following lemma we obtain the lower bound for $F_\d$ that
allows us to show that there are no bulk vortices, i.e, $d_i = 0$.
\begin{lemma}\label{lem:absence_of_bulk_vortices}
  There exists a $\d_0>0$ such that, for any $\d \leq \d_0$, there are
  no bulk vortices inside the domain $\Omega \setminus
  \overline{S}$. Moreover, there exist an $\alpha > 1$ and an
  $\d \ll R' \ll 1$ such that the following inequality holds:
  \begin{equation}\label{eq:quad_ineq}
    \sum_{j=1}^N
    \Bra{ \pi(1 - \theta)^2(|\log{\d}| - |\log{R'}| + O(\d))\dvjp^2
      - 2\pi \dvj (\he - \Hrj) }
    \leq O(1).
  \end{equation}
\end{lemma}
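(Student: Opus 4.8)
The plan is to combine the differential inequality \eqref{eq:decomp_ineq_2} with the ball-construction lower bound to rule out bulk vortices, and then to feed the same inequality back to extract the quadratic estimate \eqref{eq:quad_ineq} for the hole-vortex corrections $\dvj$. First I would estimate the right-hand side of \eqref{eq:decomp_ineq_2}. Using $\he = \sigma|\log\d|$ and $\Hrj = \he\xi_0(\aj) + \dj K_0(R)$, the factor $\he - \Hrj = \sigma(1-\xi_0(\aj))|\log\d| - \dj K_0(R)$ is of order $|\log\d|$, so the bulk-vortex sum is bounded by $C|\log\d|\sum_{i\in\index_1}|\bdi| \le C d|\log\d|$, while the hole-vortex sum is bounded by $C|\log\d|\sum_j|\dvj|$. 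On the left-hand side, the ball-construction term contributes $\pi d(\log\frac{\d^2}{d\eps} - C) = \pi d(2|\log\d| - |\log\eps| - \log d - C)$; the assumption $|\log\eps|\gg|\log\d|$ makes the coefficient of $d$ go to $-\infty$, so this term dominates negatively unless $d=0$. Since $F_\d[v,B;G]\ge 0$, comparing orders in $|\log\d|$ forces $d = \sum_{i\in\index}|d_i| = 0$ for $\d\le\d_0$; hence there are no bulk vortices in $\Omega\setminus\overline S$, and moreover every $\dvj = \dedj - \dj$ (the degree of $v$ on $\grj$) must then be of order at most $O(1)$ in $\d$ so that the right side of \eqref{eq:decomp_ineq_2} stays bounded — in particular $\index_1 = \emptyset$ and the bulk sum disappears entirely.

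Once bulk vortices are excluded, \eqref{eq:decomp_ineq_2} collapses to
\begin{equation*}
  F_\d[v, B; G] \le 2\pi\sum_{j=1}^N \dvj(\he - \Hrj) + O(1).
\end{equation*}
To obtain the matching lower bound \eqref{eq:quad_ineq}, I would localize a lower bound for $F_\d[v,B;G]$ near each hole: on the annulus $\Kj = B(\aj,R')\setminus\overline{B(\aj,R)}$, where $|v|\ge 1-\theta$ and $v$ carries degree $\dvj$, a standard annulus energy estimate (Jensen/Cauchy--Schwarz in polar coordinates, exactly as in the lower bounds of \cite{BetBreHel94, SanSer07}) gives
\begin{equation*}
  \frac12\int_{\Kj}|(\nabla - iB)v|^2\,dx \ge \pi(1-\theta)^2\dvjp^2\,\log\frac{R'}{R} + O(\d),
\end{equation*}
and since $R = \d + O(\d^2)$ one has $\log\frac{R'}{R} = |\log\d| - |\log R'| + O(\d)$. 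Summing over $j$ and using that the annuli $\Kj$ are disjoint subsets of $G$ (they avoid the bad balls because $|v|\ge 1-\theta$ there), we get $F_\d[v,B;G] \ge \sum_j \pi(1-\theta)^2(|\log\d| - |\log R'| + O(\d))\dvjp^2$. Combining with the displayed upper bound yields exactly \eqref{eq:quad_ineq}.

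The delicate point — and the step I expect to be the main obstacle — is the bookkeeping of error terms in the annulus lower bound and the verification that the annuli $\Kj$ can indeed be chosen disjoint from the bad set $V$ and from one another while still having inner radius $R = \d + O(\d^2)$ and a common outer radius $R'$ with $\d \ll R' \ll 1$. One must check that $F_\d[v,B] \le |\log\d|^2$ (Lemma \ref{lem:F_upper_bound}) together with the ball-construction conclusion $r(\mathfrak B) = \d^2/2$ leaves enough room: the total radius of bad balls is $O(\d^2)$, so one can slip the circle $\grj$ with $R \in [\d, \d+\d^2]$ between $\bd\odj$ and the bad balls, which is precisely the $\Rd\in[\d,\d+\d^2]$ whose existence is claimed in Theorem \ref{thm:main}. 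A secondary subtlety is that the gauge term $B$ must be absorbed harmlessly: since $\|B\|_{L^2} \le C|\log\d|$ while the annulus has measure $O(R'^2) = o(1)$, the cross terms $\int_{\Kj} B\cdot\Im(\overline v\nabla v)$ and $\int_{\Kj}|v|^2|B|^2$ are $o(1)$ after Cauchy--Schwarz, so replacing $|(\nabla-iB)v|^2$ by $|\nabla|v||^2 + |v|^2|\nabla\varphi - B|^2 \ge (1-\theta)^2|\nabla\varphi|^2 + o(1)$ (with $\varphi$ the phase of $v$) is legitimate and the winding $\int_{\grj}\nabla\varphi\cdot\tau = 2\pi\dvj$ produces the stated quadratic term.
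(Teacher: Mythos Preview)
Your argument has a genuine logical gap in the first step. You attempt to conclude $d=0$ directly from \eqref{eq:decomp_ineq_2} by comparing orders, but the right-hand side contains the term $2\pi\sum_j \dvj(\he-\Hrj)$, which is of order $|\log\d|\sum_j|\dvj|$, and the $\dvj$ are \emph{not} a priori bounded. If $\sum_j|\dvj|$ is large compared to $d$, this term can absorb the $\pi d|\log\eps|$ contribution on the left, and no contradiction results. (Incidentally, your expansion has a sign slip: $\log(\d^2/(d\eps)) = |\log\eps|-2|\log\d|-\log d$, so the ball-construction term is large \emph{positive}, not negative; the intended contradiction is ``large positive $\le$ something smaller'', but this still fails when the right side is uncontrolled.) The paper handles this by a dichotomy on a parameter $\alpha>1$: either $\sum_j|\dvj|\le\alpha d$, in which case the right side is $O(d|\log\d|)$ and $|\log\eps|\gg|\log\d|$ forces $d=0$; or $\sum_j|\dvj|>\alpha d$, in which case the annulus lower bound must be established \emph{first}, with $\sum_j|\drj|\ge\frac{\alpha-1}{\alpha}\sum_j|\dvj|$, so that the resulting quadratic term in $\dvj$ dominates the linear one and again forces $d=0$. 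Only after $d=0$ is known does the inequality collapse and can the clean form \eqref{eq:quad_ineq} be read off. In short, the annulus lower bound is not a post-processing step but is needed to close the argument ruling out bulk vortices.

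A secondary issue is the handling of the magnetic potential in the annulus estimate. Your sketch asserts the cross terms with $B$ are $o(1)$ by Cauchy--Schwarz, but $\|B\|_{L^2(\Omega)}\le C|\log\d|$ does not make $\|B\|_{L^2(K^j)}$ small. The paper instead writes $2\pi\drj=\int_{\grj}(\nabla\Phi-B)\cdot\tau+\int_{B_r^j}\curl B$, bounds the second piece by $Cr|\log\d|$, and carries the resulting errors $|\log\d|^2 R'^2$ and $|\log\d|^3 R'^2$ explicitly; this is what forces the specific choice $R'\sim|\log\d|^{-5/2}$, rather than an arbitrary $\d\ll R'\ll 1$.
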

\begin{proof}
  Fix $\alpha > 1$ and consider two cases:
  \begin{enumerate}
  \item $\sum_{j=1}^N|\dvj| \leq \alpha\sum_{i \in \index} |d_i|$. The
    leading term in \eqref{eq:decomp_ineq_2} is $\pi d |\log{\eps}|$
    on the left hand side and it cannot be bounded by the right hand
    side if $d \neq 0$ because the leading term there is of order
    $d \cdot O(|\log{\d}|)$. Therefore $d = 0$, and there are no bulk
    vortices and all $\dv = 0$.
  \item $\sum_{j=1}^N|\dvj| > \alpha\sum_{i \in \index} |d_i|$. We
    need an additional lower bound on the energy $F_\d[v, B; G]$.
  \end{enumerate}
		    
  To estimate $F_\d[v, B; G]$, we integrate over circles
  $\grj = \bd B(\aj, r)$ around the holes $\odj$ with $r > R$. If
  $|u| \neq 0$ on $\grj$ for some $r > R$, we can define the degree on
  $\grj$ via
  \begin{equation}
    \drj = \deg(u, \grj) = \deg(v, \grj)
  \end{equation}
  Denote
  \begin{equation}
    \mathfrak{R} = \{r \in (R, R_{max})\ :\ |u| > 1 - \theta
    \text{ on } \grj
    \text{ for all } j=1\dots N
    \},
  \end{equation}
  where $\theta$ is specified in the Ball Construction Method and
  $R_{max}$ plays the same role as in Lemma 1, i.e., it is the maximal
  radius $r$ such that $B(\aj, r)$ are disjoint and do not intersect
  $\bd\O$. The total degree on $\bd\O$ is the sum of the degrees of
  all vortices. Since $\drj = \dvj$ by definition of $\dvj$, we have
  \begin{equation}\label{eq:drj_vs_dvj}
    \sum_{j=1}^{N} |\drj|
    \geq \sum_{j=1}^{N} |\dvj| - \sum_{i \in \index} |d_i|
    \geq \frac{\alpha - 1}{\alpha}\sum_{j=1}^{N} |\dvj|.
  \end{equation}		    
  Using the definition of the degree and the Divergence Theorem for
  $r \in \mathfrak{R}$ we get
  \begin{equation}
    2\pi\drj - \int_{\Brj} \curl B\,dx
    = \int_{\grj} \nabla \Phi \cdot \tau - B \cdot \tau dS
    = \int_{\grj} (\nabla\Phi - B)\cdot\tau dS
  \end{equation}
  or
  \begin{equation}\label{eq:D_r}
    2\pi\drj
    = \int_{\grj} (\nabla\Phi - B)\cdot\tau dS + \int_{\Brj} \curl B\,dx
    = I_1(r) + I_2(r)
  \end{equation}
  for any $j = 1\dots N$. Here $\Brj = B(\aj, r)$ and
  $v = |v|e^{i\Phi}$. The following estimates
  \begin{align}
    I_1^2 &\leq \meas(\grj) \int_{\grj} |\nabla\Phi - B|^2 dS
            \leq 2\pi r \int_{\grj} \frac{|(\nabla - iB)v|^2}{|v|^2} dS
            \leq \frac{2\pi r}{(1 - \theta)^2}\int_{\grj} |(\nabla - iB)v|^2 dS, \\
    I_2^2 &\leq \meas(\Brj) \int_{\Brj} |\curl B|^2 dx \leq C_1 |\log{\d}|^2 r^2,
  \end{align}
  hold since $|v| > 1 - \theta$ by the Ball Construction
  Method. Further
  \begin{align}
    4\pi^2 \drjp^2
    \nonumber &= (I_1(r) + I_2(r))^2 \\
              &\label{eq:est_D_r} \leq \frac{2\pi r}{(1 - \theta)^2}\int_{\grj} |(\nabla - iB)v|^2 dS
                + 2C_1 |\log{\d}|^2 r^2 \cdot I_1
                + C_1|\log{\d}|^2 r^2
  \end{align}
  for $r \in \mathfrak{R}$. Now, divide both sides of
  \eqref{eq:est_D_r} by $r$ and integrate outside of the ``bad" disks
  from $R$ to $R'$
  \begin{align}
    4\pi^2\int_{(R, R') \cap \mathfrak{R}} \frac{\drjp^2}{r}\,dr
    \nonumber &\leq \frac{2\pi}{(1 - \theta)^2} \int_{(R, R') \cap \mathfrak{R}}
                \int_{\grj} |(\nabla - iB)v|^2 dS dr \\
    \nonumber &+ 2C_1 |\log{\d}|^2 \cdot \int_{(R, R') \cap \mathfrak{R}} I_1 r dr
                + C_1|\log{\d}|^2 \left.\frac{r^2}{2}\right|_{R}^{R'} \\
    \nonumber &\leq \frac{4\pi}{(1 - \theta)^2} F_\d[v, B; B_{R'}^j] + \frac{C_1}{2}|\log{\d}|^2 R'^2 \\
    \nonumber &+ 2C_1|\log{\d}|^2 \cdot R'\cdot \sqrt{\pi R'^2} \cdot
                \left( \int_{(R, R') \cap \mathfrak{R}} \int_{\gamma_r} \frac{|(\nabla - iB)v|^2}{|v|^2} dS dr \right)^{1/2} \\
    \label{eq:up_est} &\leq \frac{4\pi}{(1 - \theta)^2} F_\d[v, B; K^j]
                        + \frac{C_1}{2}|\log{\d}|^2 R'^2 + \frac{C_3}{1 - \theta} |\log{\d}|^3 R'^2,
  \end{align}
  since $|v| > 1 - \theta$ by the definition of $\mathfrak{R}$. Here
  $R' \ll R_{max}$ that will be prescribed later on and $K^j$ is a
  union of concentric rings around $j$th hole:
  \begin{equation}
    K^j
    = \bigcup_{r \in (R, R') \cap \mathfrak{R}} \grj
    = \bigcup_{r \in (R, R') \cap \mathfrak{R}} \bd B(\aj, r).
  \end{equation}
  Notice that all $K^j$ are disjoint since $R' \ll R_{max}$ and
  $K^j \subset G$ for all $j = 1\dots N$.

  In order to obtain the lower bound for $F_\d$ we divide both sides
  in \eqref{eq:up_est} by $4\pi/(1 - \theta)^2$:
  \begin{equation}\label{eq:up_est_2}
    \pi(1 - \theta)^2\int_{(R, R') \cap \mathfrak{R}} \frac{\drjp^2}{r}\,dr
    \leq F_\d[v, B; K^j]
    + \frac{C_1(1 - \theta)^2}{8\pi}|\log{\d}|^2 R'^2 + \frac{C_3(1 - \theta)}{4\pi} |\log{\d}|^3 R'^2.
  \end{equation}
  We can choose
  \begin{equation}\label{eq:cond_zeta}
    R' = C\zeta^{1/2}|\log{\d}|^{-2} \gg R
  \end{equation}
  and an appropriate constant $C$ such that for
  $\zeta = |\log{\d}|^{-1} = o(1)$ the sum of last two terms in
  \eqref{eq:up_est_2} is less than $\zeta$ for small $\d$. Notice,
  that $\meas((R, R') \setminus \mathfrak{R}) < \d^2$ by the Ball
  Construction Method and $R \leq \d + \d^2$. Therefore
  \begin{align}\label{eq:low_est}
    \sum_{j=1}^N\int_{(R, R') \cap \mathfrak{R}} \frac{\drjp^2}{r}\,dr
    \nonumber &\geq \left. \frac{(\alpha - 1)^2}{\alpha^2}\frac1N\sum_{j=1}^{N}|\dvj|^2 \log{r} \right|_{\d + 2\d^2}^{R'} \\
              &\geq \frac{(\alpha - 1)^2}{\alpha^2}\frac1N\sum_{j=1}^{N}|\dvj|^2 (|\log{\d}| - |\log{R'}| + O(\d)).
  \end{align}
  Thus we can combine \eqref{eq:up_est_2} and \eqref{eq:low_est} to
  express the lower bound for $F_\d[v, B; G]$ in terms of the
  additional degrees $\dvj$:
  \begin{align}\label{eq:F_lower_bound_1}
    F_\d[v, B; G]
    \nonumber &\geq \sum_{j=1}^{N} F_\d[v, B; K^j] \\
              &\geq \pi(1 - \theta)^2\frac{(\alpha - 1)^2}{\alpha^2}\frac1N\sum_{j=1}^{N}|\dvj|^2 (|\log{\d}| - |\log{R'}| + O(\d)) - \zeta.
  \end{align}
  Substituting $\zeta = |\log{\d}|^{-1}$ and combining
  \eqref{eq:F_lower_bound_1} with \eqref{eq:decomp_ineq_2}, we get
  \begin{gather}
    \sum_{j=1}^{N}\Par{ \frac1N\pi(1 - \theta)^2\frac{(\alpha -
        1)^2}{\alpha^2}(|\log{\d}| - |\log{R'}| + O(\d))\dvjp^2
      \nonumber - 2\pi (\he - \Hrj)\dvj } \\
    \label{eq:decomp_ineq_3} \leq -\pi \sum_{i \in \index_1}|\bdi|
    \left( |\log{\eps}| - 2|\log{\d}| + |\log{d}| - C \right) + 2\pi
    \sum_{i \in \index_1} (\he - \hdd(\bi))\bdi + O(1).
  \end{gather}
  Compare the order of the leading terms in \eqref{eq:decomp_ineq_3}:
  \begin{equation}\label{eq:F_lower_bound_order}
    \sum_{j=1}^{N}\Par{ A|\log\d|\dvjp^2 - O(|\log\d|)\dvj } \leq -d|\log\eps| + O(1)
  \end{equation}
  with $A > 0$. The left hand side of \eqref{eq:F_lower_bound_order}
  is a sum of quadratic functions in $\dvj$ with positive leading
  coefficients:
  \begin{equation}
    q_j(\dvj) = A|\log\d|\dvjp^2 - O(|\log\d|)\dvj.
  \end{equation}
  The values of parabolas $q_j$ are bounded from below by the values
  at their vertices
  \begin{equation}
    t^j = \frac{O(|\log\d|)}{2A|\log\d|} = O(1),
  \end{equation}
  that are themselves bounded. Therefore
  \begin{equation}\label{eq:eps_del_main_contr}
    -d|\log\eps| + o(1)
    \geq \sum_{j=1}^{N} q_j(\dvj)
    \geq \sum_{j=1}^{N} q_j(t_j)
    = O(|\log\d|).
  \end{equation}
  Since $|\log\eps| \gg |\log\d|$, the inequality
  \eqref{eq:eps_del_main_contr} can hold only if $d = 0$, i.e. there
  are no bulk vortices. This, in turn, implies that $\drj = \dvj$ and
  the inequality \eqref{eq:drj_vs_dvj} is no longer needed. It
  simplifies the lower bound \eqref{eq:low_est} and yields the desired
  inequality.
\end{proof}

\section{Proof of Theorem \ref{thm:main}: Equality of Degrees}\label{sec:equality_of_degrees}

\begin{proof}
  To finish the proof of Theorem \ref{thm:main} we need to show that
  all $\dvj = 0$. We start with the quadratic inequality for $\dvj$
  obtained in Lemma \ref{lem:absence_of_bulk_vortices}:
  \begin{equation}\label{eq:decomp_ineq_4}
    \sum_{j=1}^N
    \Bra{ \pi(1 - \theta)^2(|\log{\d}| - |\log{R'}| + O(\d))\dvjp^2
      - 2\pi \dvj (\he - \Hrj) }
    \leq O(1),
  \end{equation}
  where $\Hrj = \he\xi_0(\aj) + \dj K_0(R)$. This inequality has the
  same structure as the quadratic functional in $S^1$-valued case:
  there are no mixed terms $\dvi\dvj$. Therefore we can find zeros for
  each $j = 1\dots N$ separately.
	
  Fix $1 \leq j \leq N$. Clearly, $\dvj = 0$ is one of two roots of
  \begin{equation}\label{eq:quad_for_fixed_j}
    \pi(1 - \theta)^2(|\log{\d}| - |\log{R'}| + O(\d))\dvjp^2
    - 2\pi \dvj (\he - \Hrj) = 0.
  \end{equation} Since $K_0(R) = |\log\d| + O(1)$ and
  \begin{equation}
    \dj = \left\llbracket\sigma\Par{1 - \xi_0(\aj)}\right\rrbracket,
  \end{equation}
  we can calculate the coefficient for the linear term in
  \eqref{eq:decomp_ineq_4}:
  \begin{align}
    -2\pi(\he - \Hrj)
    \nonumber &= -2\pi(\sigma|\log\d| - \sigma|\log\d|\xi_0(\aj)
                - \left\llbracket\sigma\Par{1 - \xi_0(\aj)}\right\rrbracket|\log\d|) + O(1) \\
              &= -2\pi|\log\d|\big(\sigma\Par{1 - \xi_0(\aj)} - \left\llbracket\sigma\Par{1 - \xi_0(\aj)}\right\rrbracket\big) + O(1).
  \end{align}
  Since $\left\llbracket\cdot\right\rrbracket$ is the nearest integer,
  we have
  \begin{equation}
    \Big|\sigma\Par{1 - \xi_0(\aj)} - \left\llbracket\sigma\Par{1 - \xi_0(\aj)}\right\rrbracket\Big|
    \leq \frac12 - \xi,
  \end{equation}
  assuming the uniqueness condition \eqref{eq:uniqueness_cond} and
  taking
  \begin{equation}
    \xi = \min_{j = 1\dots N} \dist\Par{\sigma\Par{1 - \xi_0(\aj)}, \Z + \frac12} > 0.
  \end{equation} The second zero of \eqref{eq:quad_for_fixed_j} can be estimated as follows:
  \begin{equation}
    |t_j|
    = \Abs{\frac{-2\pi\big(\sigma\Par{1 - \xi_0(\aj)} - \left\llbracket\sigma\Par{1 - \xi_0(\aj)}\right\rrbracket\big) + o(1)}{\pi(1 - \theta)^2 + o(1)}}
    < \frac{1 - 2\xi}{(1 - \theta)^2 + o(1)} + o(1).
  \end{equation}
  Having $\xi$ fixed and $\d < \d_0$ sufficiently small, we can always
  take $\theta > 0$ small enough to make sure that $|t_j| < 1 - \xi$.
		
  Since $\dvj$ can take only integer values, if at least one $\dvj$ is
  nonzero, the left hand side of \eqref{eq:decomp_ineq_4} becomes
  strictly positive of order $O(\log\d)$. This contradiction finishes
  the proof of main theorem yielding
  \begin{equation}
    \dvj = 0
    \text{ or }
    \dedj = \dj
  \end{equation}
  for all $j = 1\dots N$.
\end{proof}

\section{Acknowledgements}
The work of LB, OR and VR was partially supported by NSF grant DMS-1405769. The authors would like to thank Valery Vinokur for stimulating discussions of physics relevant to this work.

\bibliographystyle{plain} \bibliography{GL}

\begin{thebibliography}{10}

\bibitem{AftSanSer01}
A.~Aftalion, E.~Sandier, and S.~Serfaty.
\newblock Pinning phenomena in the {G}inzburg--{L}andau model of
  superconductivity.
\newblock {\em Journal de math{\'e}matiques pures et appliqu{\'e}es},
  80(3):339--372, 2001.

\bibitem{AlaBro05}
S.~Alama and L.~Bronsard.
\newblock Pinning effects and their breakdown for a {G}inzburg--{L}andau model
  with normal inclusions.
\newblock {\em Journal of Mathematical Physics}, 46(9):095102, 2005.

\bibitem{AlaBro06}
S.~Alama and L.~Bronsard.
\newblock Vortices and pinning effects for the {G}inzburg--{L}andau model in
  multiply connected domains.
\newblock {\em Communications on pure and applied mathematics}, 59(1):36--70,
  2006.

\bibitem{AndBauPhi03}
N.~Andre, P.~Bauman, and D.~Phillips.
\newblock Vortex pinning with bounded fields for the {G}inzburg--{L}andau
  equation.
\newblock 20(4):705--729, 2003.

\bibitem{AydKac09}
H.~Aydi and A.~Kachmar.
\newblock Magnetic vortices for a {G}inzburg--{L}andau type energy with
  discontinuous constraint. {II}.
\newblock {\em Commun. Pure Appl. Anal.}, 8:977--998, 2009.

\bibitem{BerRyb13}
L.~Berlyand and V.~Rybalko.
\newblock Homogenized description of multiple {G}inzburg--{L}andau vortices
  pinned by small holes.
\newblock {\em Networks \& Heterogeneous Media}, 8(1), 2013.

\bibitem{BetBreHel94}
F.~Bethuel, H.~Brezis, and F.~H{\'e}lein.
\newblock {\em {G}inzburg--{L}andau Vortices}.
\newblock Springer, 1994.

\bibitem{BetBreHel93}
Fabrice Bethuel, Ha{\"\i}m Brezis, and Fr{\'e}d{\'e}ric H{\'e}lein.
\newblock Asymptotics for the minimization of a ginzburg-landau functional.
\newblock {\em Calculus of Variations and Partial Differential Equations},
  1(2):123--148, 1993.

\bibitem{ChaRic97}
S.~J. Chapman and G.~Richardson.
\newblock Vortex pinning by inhomogeneities in type-{II} superconductors.
\newblock {\em Physica D: Nonlinear Phenomena}, 108(4):397--407, 1997.

\bibitem{DosMis11}
M.~Dos~Santos and O.~Misiats.
\newblock {G}inzburg--{L}andau model with small pinning domains.
\newblock {\em Netw. Heterog. Media}, 6(4), 2011.

\bibitem{GinLan65}
V.~L. Ginzburg and L.~D. Landau.
\newblock Collected papers of {L}. {D}. {L}andau.
\newblock {\em ed. D. Ter Haar}, 1965.

\bibitem{IarBerRybVin13}
O.~Iaroshenko, V.~Rybalko, V.~Vinokur, and L.~Berlyand.
\newblock Vortex phase separation in mesoscopic superconductors.
\newblock {\em Scientific Reports}, 3, 2013.

\bibitem{Jer99}
R.~Jerrard.
\newblock Lower bounds for generalized {G}inzburg--{L}andau functionals.
\newblock {\em SIAM Journal on Mathematical Analysis}, 30(4):721--746, 1999.

\bibitem{Kac10}
A.~Kachmar.
\newblock Magnetic vortices for a {G}inzburg--{L}andau type energy with
  discontinuous constraint.
\newblock {\em ESAIM Control Optim. Calc. Var.}, 16:545--580, 2010.

\bibitem{San98}
E.~Sandier.
\newblock Lower bounds for the energy of unit vector fields and applications.
\newblock {\em Journal of Functional Analysis}, 152(2):379--403, 1998.

\bibitem{SanSer00}
E.~Sandier and S.~Serfaty.
\newblock Global minimizers for the {G}inzburg--{L}andau functional below the
  first critical magnetic field.
\newblock 17(1):119--145, 2000.

\bibitem{SanSer00-2}
E.~Sandier and S.~Serfaty.
\newblock On the energy of type-{II} superconductors in the mixed phase.
\newblock {\em Reviews in Mathematical Physics}, 12(09):1219--1257, 2000.

\bibitem{SanSer03}
E.~Sandier and S.~Serfaty.
\newblock {G}inzburg--{L}andau minimizers near the first critical field have
  bounded vorticity.
\newblock {\em Calculus of Variations and Partial Differential Equations},
  17(1):17--28, 2003.

\bibitem{SanSer07}
E.~Sandier and S.~Serfaty.
\newblock {\em Vortices in the Magnetic {G}inzburg--{L}andau Model}.
\newblock Birkh{\"a}user Boston, 2007.

\end{thebibliography}

\appendix

\section{Appendix. Gradient estimate}
\begin{lemma}\label{lem:grad_est}
  Let $u$ solve the Poisson equation with Dirichlet boundary
  conditions in $\Od = \O \setminus \cup_{j=1}^N \odj$ with
  $\odj = B(\aj, \d)$, that is
  \begin{equation}
    \begin{cases}
      -\Lap u = f &\text{ in } \Od, \\
      u = g &\text{ on } \bd\O, \\
      u = g_j &\text{ on } \bd\odj,
    \end{cases}
  \end{equation}
  where $g$ and $g_j$ are smooth functions that are defined in the
  whole of $\Od$. Then
  \begin{equation}\label{eq:full_grad_est}
    \|\nabla u\|_{L^{\infty}(\Od)}
    \leq C\Par{\frac{1}{\d}\|u\|_{L^{\infty}(\Od)} + \|f\|_{L^\infty(\Od)} + \|\Lap g\|_{L^\infty(\O)}
      + \d\sum_{j=1}^{N}\|\Lap g_j\|_{L^\infty(\Od)}}.
  \end{equation}
\end{lemma}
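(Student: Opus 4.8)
The plan is to exploit the two–scale geometry of $\Od$: a fixed smooth outer boundary $\bd\O$ at scale $O(1)$, together with $N$ circular holes $\odj=B(\aj,\d)$ of radius $\d$ that are mutually separated, and separated from $\bd\O$, by distances $O(1)$. Accordingly, for a point $x_0\in\Od$ I would estimate $\nabla u(x_0)$ using the local elliptic gradient estimate for the Poisson equation, applied at the length scale appropriate to $x_0$: namely $\ell(x_0)\sim\dist(x_0,\bd\Od)$ when $x_0$ lies outside the collars $B(\aj,2\d)\setminus\overline{\odj}$ of the holes, and $\ell(x_0)\sim\d$ (after rescaling to unit size) when $x_0$ lies in one of those collars. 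The two regimes are then patched by taking the maximum of the resulting bounds.

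\emph{Away from the holes.} If $x_0\in\Od$ has $\dist(x_0,\aj)\ge 2\d$ for every $j$, then $B\bigl(x_0,\tfrac12\dist(x_0,\bd\Od)\bigr)\subset\Od$, and the interior gradient estimate for $-\Lap u=f$ gives $\Abs{\nabla u(x_0)}\le \tfrac{C}{r}\|u\|_{L^\infty(B(x_0,r))}+Cr\,\|f\|_{L^\infty}$ with $r=\tfrac12\dist(x_0,\bd\Od)\ge c\,\d$; since $r\le\diam\O$, this is at worst $\tfrac{C}{\d}\|u\|_{L^\infty(\Od)}+C\|f\|_{L^\infty(\Od)}$. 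When instead $x_0$ is within a fixed distance of $\bd\O$ (hence far from every hole), one subtracts $g$: the function $u-g$ vanishes on $\bd\O$ and solves $-\Lap(u-g)=f+\Lap g$, so the standard boundary $W^{2,p}$ (equivalently $C^{1,\alpha}$) estimate at unit scale, which for \emph{zero} Dirichlet data involves no derivatives of the data, controls $\nabla(u-g)$ — and hence $\nabla u$ — by $C\bigl(\|u\|_{L^\infty}+\|f\|_{L^\infty}+\|\Lap g\|_{L^\infty(\O)}\bigr)$, the Dirichlet datum on $\bd\O$ having been converted into the forcing $\Lap g$. Together these give the first three terms of \eqref{eq:full_grad_est}.

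\emph{Near a hole (the main point).} Fix $j$ and, in the collar $B(\aj,2\d)\setminus\overline{\odj}$, pass to $w:=u-g_j$, which vanishes on $\bd\odj$ and satisfies $-\Lap w=f+\Lap g_j$. Rescale to unit size via $x=\aj+\d y$ and set $W(y):=w(\aj+\d y)$ on the \emph{fixed} annulus $\Cur{1<|y|<2}$; then $-\Lap_y W=\d^{2}(f+\Lap g_j)(\aj+\d y)$, $W=0$ on $\{|y|=1\}$, and by the maximum principle $\|W\|_{L^\infty(\{|y|\le 2\})}\le\|u\|_{L^\infty(\Od)}$ up to the oscillation of $g_j$ over $B(\aj,2\d)$, which does not affect the order of the final bound. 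Because the rescaled domain is a fixed smooth annulus, the corresponding $W^{1,\infty}$ estimate — interior away from $\{|y|=2\}$, up to the boundary on $\{|y|=1\}$ where the data is zero — has a constant \emph{independent of} $\d$, and yields $\|\nabla_y W\|_{L^\infty(\{1<|y|\le 3/2\})}\le C\bigl(\|W\|_{L^\infty}+\d^{2}\|f+\Lap g_j\|_{L^\infty}\bigr)$. Undoing the dilation multiplies the gradient by $\d^{-1}$, so on the collar of the $j$th hole $\|\nabla u\|_{L^\infty}\le \tfrac{C}{\d}\|u\|_{L^\infty(\Od)}+C\d\,\|f\|_{L^\infty(\Od)}+C\d\,\|\Lap g_j\|_{L^\infty(\Od)}$; bounding $\d\|f\|\le\|f\|$ (as $\d<1$) and summing over $j$ produces the last term $\d\sum_{j=1}^N\|\Lap g_j\|_{L^\infty(\Od)}$ of \eqref{eq:full_grad_est}. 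Taking the maximum over the two regimes completes the proof.

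The only real subtlety — the ``hard part'', such as it is — is the $\d$-bookkeeping of the rescaling near the holes: one must (a) normalize the Dirichlet data on $\bd\odj$ by subtracting $g_j$ \emph{before} dilating, so that the surviving forcing is $f+\Lap g_j$, which scales like $\d^{2}$ and hence, after division by the $\d$ coming from the gradient dilation, contributes only the factor $\d$ in front of $\|\Lap g_j\|$; and (b) verify that the elliptic estimate on the rescaled annulus is uniform in $\d$, which holds precisely because that annulus is fixed (independent of $\d$). A minor point is to make the covering of $\Od$ by the ``away-from-holes'' region and the collars $B(\aj,2\d)\setminus\overline{\odj}$ exhaustive and essentially disjoint, so that every $x_0\in\Od$ is handled by exactly one of the two estimates above.
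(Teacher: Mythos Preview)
Your proof is correct and follows essentially the same strategy as the paper's: split into cases according to distance from the boundary components, apply interior/boundary elliptic gradient estimates at the appropriate scale, and near each hole subtract $g_j$ and rescale by $\d$ so that the estimate on the fixed unit annulus carries a $\d^{2}$ in front of the forcing, which becomes the factor $\d$ after undoing the dilation. The paper cites Lemmas~A.1 and~A.2 of Bethuel--Brezis--H\'elein for the pointwise gradient bounds and uses a fixed ($\d$-independent) threshold $m$ for the ``away-from-holes'' region rather than your $2\d$-collar, but these differences are cosmetic.
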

\begin{proof}
  The proof is based on lemmas A.1 and A.2 from
  \cite{BetBreHel93}. Consider the three cases: the point $x_0\in\Od$
  is far from the boundaries of $\bd\Od$, it is close to $\bd\O$, and
  it is close to $\bd\odj$ for some $j = 1\dots N$. The first case
  when $x_0\in K \subset\subset \Od$ is resolved in Lemma A.1
  \cite{BetBreHel93} and the second case, when $x_0$ is close to
  $\bd\O$, can be deduced from Lemma A.2 using
  $\widetilde{u} = u - g$. The results of both lemmas can be merged
  together in the following estimate:
  \begin{equation}\label{eq:simple_grad_est}
    |\nabla u(x_0)|
    \leq C\Par{\|u\|_{L^{\infty}} + \|f\|_{L^\infty} + \|\Lap g\|_{L^\infty}}
    \quad\text{a.e.}
  \end{equation}
  when $\dist(x_0, \bd\odj) > m > 0$ with some fixed $m$ independent
  of $\d$.
		
  The third case is specific to our setting. Let $x_0$ be close to one
  of the holes: $\dist(x_0, \bd\odj) \leq m$ for some $j = 1\dots
  N$. Without loss of generality assume $\aj = 0$. We introduce the
  new spatial variable $y = \frac{x}{\d}$ to rescale the domain so
  that the $\odj$ becomes $B(0, 1)$ and $x_0$ becomes $y_0$. The
  Poisson equation in new coordinates becomes
  \begin{equation}
    -\Lap_y u = \d^2 f.
  \end{equation} If $\dist(y_0, \bd B(0, 1)) > m$, we apply Lemma A.1 from \cite{BetBreHel93} again. It gives us the estimate for $|\nabla_y u(y_0)|$:
  \begin{equation}
    |\nabla_y u(y_0)|
    \leq C\Par{\|u\|_{L^{\infty}} + \d^2\|f\|_{L^\infty}}
  \end{equation}
  that in turn implies the estimate for $|\nabla_x u(x_0)|$:
  \begin{equation}
    |\nabla_x u(x_0)|
    = \frac{1}{\d}|\nabla_y u(y_0)|
    \leq \frac{C}{\d}\|u\|_{L^{\infty}(\Od)} + C\d\|f\|_{L^\infty(\Od)}.
  \end{equation}
  Finally, we apply Lemma A.2 to $\widetilde{u}_j = u - g_j$ that
  satisfies the problem
  \begin{equation}
    \begin{cases}
      -\Lap_y \widetilde{u}_j = \d^2f + \Lap_y g_j &\text{ in } B(0, 1+m) \setminus \overline{B(0, 1)}, \\
      \widetilde{u}_j = h_j &\text{ on } \bd B(0, 2+m), \\
      \widetilde{u}_j = 0 &\text{ on } \bd B(0, 1).
    \end{cases}
  \end{equation}
  where $h_j(y) = u(y) - g_j(y)$. Since the proof of Lemma A.2 uses
  only local estimates and $y_0$ is far from the $\bd B(0, 2+m)$, the
  function $h_j$ does not play a role for the estimate of
  $|\nabla_y u(y_0)|$. It yields the estimate
  \begin{equation}
    |\nabla_y u(y_0)|
    \leq C\Par{\|u\|_{L^{\infty}} + \d^2\|f\|_{L^\infty} + \|\Lap_y g_j\|_{L^\infty}}.
  \end{equation}
  Going back to $x$ we obtain
  \begin{equation}
    |\nabla_x u(x_0)|
    \leq \frac{C}{\d}\|u\|_{L^{\infty}} + C\d\Par{\|f\|_{L^\infty} + \|\Lap_x g_j\|_{L^\infty}}.
  \end{equation}
  Merging all the estimates we finish the proof.
\end{proof}
\end{document}